\documentclass[11pt,oneside,letter]{article}
\usepackage[top=1 in, bottom=1 in, left=0.85 in, right=0.85 in]{geometry}

\usepackage{tikz}
\usetikzlibrary{shapes.geometric, arrows.meta, positioning}

\tikzstyle{block} = [rectangle, draw, rounded corners, text centered, minimum height=2em, minimum width=4cm]
\tikzstyle{arrow} = [thick, ->, >=Stealth]

\usepackage{enumitem} % in your preamble
\usepackage{amsmath} 
\usepackage{amsthm} 
\usepackage{amssymb}
\usepackage{tabularx}
\usepackage{graphicx} 
\usepackage{subcaption}
\usepackage{algorithm}
\usepackage{graphicx}
\usepackage{epstopdf}
\usepackage{enumerate}
\usepackage{stfloats}
\usepackage{xcolor}
\usepackage{thmtools}
\usepackage{thm-restate}
\usepackage{multirow}
\usepackage[hidelinks]{hyperref}

\newtheorem{theorem}{Theorem}
\newtheorem{lemma}{Lemma}

\newtheorem{assumption}{Assumption}
\newtheorem{remark}{Remark}

\newcommand{\delete}[1]{}

\title{Zero-Waiting Load Balancing with Heterogeneous Servers in \\
Heavy Traffic} 
\author{Xin Liu \\ShanghaiTech University \\ liuxin7@shanghaitech.edu.cn
\and Lei Ying \\University of Michigan, Ann Arbor \\ leiying@umich.edu}
\date{}

\begin{document}
\maketitle

\begin{abstract}
We study the steady-state delay performance of load balancing in large-scale systems with heterogeneous servers in the heavy-traffic regimes. The system consists of $N$ servers, each with a local buffer of size $b-1$, serving jobs in the first-in-first-out (FIFO) order. Jobs arrive according to a Poisson process with rate $\lambda N$, where $\lambda = 1 - N^{-\alpha}$ for any $\alpha \in (0,1)$. Service times are assumed to be exponentially distributed with fully heterogeneous rates, where the service rate of each server can differ and may scale with the system size $N$.
We study a queue length aware and service rate aware load balancing policy, Join-the-Fastest-Shortest-Queue (JFSQ), and demonstrate that it achieves {\it asymptotic zero waiting time and probability} under the heavy traffic regimes, including both the Sub-Halfin-Whitt ($\alpha \in (0,0.5)$) and Super-Halfin-Whitt ($\alpha \in [0.5,1)$) regimes. The performance bounds of waiting time and probability explicitly capture the convergence rate w.r.t. the system size $N$ and show the negative effect of server heterogeneity. 
Our analysis builds on the general framework of Stein’s method with iterative state-space peeling, where we design a sequence of 
Lyapunov functions to analyze the high-dimensional heterogeneous system without assuming exchangeability and monotonicity.  
Our analysis shows that JFSQ efficiently utilizes servers with higher capacities, and the steady-state system can be coupled with a single-server queue via Stein's method. 
To the best of our knowledge, this is the first work to establish  delay performance bounds of a load-balancing system with size $N$ and fully heterogeneous servers in heavy traffic. 
\end{abstract}

\section{Introduction}

Modern large-scale computing platforms, including data centers and cloud platforms, rely on efficient load balancing to distribute incoming jobs to appropriate servers with the objective of optimizing delay performance. Load balancing has become a cornerstone of large-scale server systems, and its design and analysis have been extensively studied in both queueing theory and performance modeling \cite{Mit_96,VveDobKar_96,LuXieKli_11, harchol-balter_2013}. Join-the-Shortest-Queue (JSQ) policy has long been regarded as a gold standard load balancing due to its excellent delay performance, where each incoming job is routed to the server with the shortest queue. The study of JSQ traces back to foundational work in the 1970s \cite{Win_77,Web_78}, and gained renewed interest in the past two decades with the advent of data centers, cloud platforms with flooding AI/ML workloads \cite{KwoLiZhu_23, FuXueHua_24, KunAnjAnk_25}. The early studies focused on fixed and small-server settings, whereas more recent efforts have studied JSQ in large-scale, many-server systems. These studies concentrated on showing JSQ policy can achieve near-optimal delay performance in large-scale {\it homogeneous} systems even in the heavy traffic regimes \cite{EscGam_18,Bra_20}. Despite the inspiring theoretical results and strong performance guarantees of JSQ in homogeneous systems, these results do not extend to settings where servers are {\it heterogeneous}. 
In a  heterogeneous setting, the classical JSQ policy, which ignores service heterogeneity, can perform poorly, not to mention achieving low delay performance \cite{GarJalWic_21}. 
To address the heterogeneity, most existing works partition servers into a finite number of service classes/groups, assuming that service rates are identical within a class, finite, and independent of system size \cite{VanCom_21,GarJalWic_21,AbuDorGar_22,AbuDorGar_24,BhaBukMuk_25}. This structural assumption is crucial in studying load-balancing systems using mean-field models or diffusion models and establishing asymptotic zero-delay performance. 
However, these assumptions do not capture the full heterogeneity in real-world server systems where the servers often vary substantially in service rates due to differences in hardware generations and software configurations \cite{SubArfLin_23, UmOhKan_24}.  
Besides, the existing studies often focus on the critical load regimes such that the nominal idle capacity is sufficient to cancel the negative effects of server heterogeneity \cite{Sto_15}.  
Therefore, this motivates a fundamental question:

\vspace{3pt}
{\it Can we establish zero-waiting load balancing for fully heterogeneous systems in heavy-traffic regimes, where the service rates vary across servers and the service rate of a server potentially scales with $N$?}
\vspace{3pt}

To address this question, we study a natural heterogeneous generalization of JSQ, namely the Join-the-Fastest-Shortest-Queue (JFSQ) policy as in \cite{WenZhoSri_20, BhaBukMuk_25}, in a setting with fully heterogeneous servers and in heavy-traffic regimes, including both sub and super Halfin-Whitt regimes. Our main results show that JFSQ achieves asymptotically zero waiting at steady state, with explicit system size bounds that characterize the convergence rates. The performance bounds reveal how system behavior depends on the heterogeneity level.  
To establish these results, we build on Stein’s method with iterative state-space peeling (ISSP) \cite{LiuYin_20, LiuKanYin_22, LiuKanYin_24}, a powerful tool for steady-state analysis of load balancing systems. However, applying it to our heterogeneous setting requires several technical innovations due to the high-dimensional and non-exchangeable state representation induced by the challenges of full heterogeneity. We directly work on the original high-dimensional state space and design a sequence of novel Lyapunov functions to show that, under JFSQ, that the servers with higher capacities are sufficiently utilized, and the steady-state system concentrates around the state space in which the total capacity of busy servers exceeds the total arrival rate. 
With these concentration results, we introduce a unified coupling argument that relates JFSQ to the fluid model of a single-server queue across both sub and super Halfin-Whitt regimes, and explicitly quantifies the delay performance. We believe these novel methodologies can be of independent interest and applied to other heterogeneous queueing systems.

\subsection*{Related Work}

The performance analysis of load balancing in many-server systems has a long and rich history. For homogeneous systems, where all servers are identical, a sequence of classical load balancing policies has been proposed and analyzed, including join-the-shortest-queue (JSQ), join-the-idle-queue (JIQ) \cite{LuXieKli_11,Sto_15}, and power-of-$d$ (Po$d$) (also called JSQ-$d$) \cite{Mit_96,VveDobKar_96}.
The early foundational works \cite{Win_77,Web_78} proved that JSQ is delay-optimal in finite-server systems assuming Poisson arrivals and exponential service times under critical load (i.e., the arrival rate is strictly smaller than the service rate and its gap is constant independent of the system size). For the many-server asymptotic regime, a recent important work \cite{EscGam_18} analyzed JSQ in the Halfin-Whitt regime and shows that the diffusion-scaled process converges to a two-dimensional diffusion limit. The corresponding steady-state performance was analyzed in \cite{Bra_20} via Stein's method, and the further refined characteristics (e.g., tail probability and sensitivity) have been studied in \cite{BanMuk_19,BanMuk_20}. 
The JIQ policy has also attracted attention due to its low communication overhead \cite{LuXieKli_11, Sto_15}. In \cite{Sto_15}, it was shown that JIQ achieves asymptotically vanishing delay 
in large-scale heterogeneous systems, though their service rates are independent of system size $N$ under the critical load regimes.  
For Po$d$, the diffusion limit of Po$d$ is shown to converge to that of JSQ in the Halfin-Whitt regime at the process level when $d$ is in the order of $\sqrt{N}\log N$ in work \cite{MukBorvan_18}. \cite{LiuYin_20,LiuYin_21} established that a class of load balancing policies, including JSQ, JIQ, and Po$d$ with properly chosen $d$, achieve zero-waiting performance at steady state in both the sub and super Halfin-Whitt regimes. The exact asymptotic queue distribution in the super-Halfin-Whitt regime was later established in \cite{ZhaBanMuk_25}. These results, however, largely rely on the assumption that all servers are homogeneous. In contrast, modern data centers and cloud platforms often exhibit substantial heterogeneity in server speeds due to differences in hardware generations and software configuration. In these heterogeneous systems, heterogeneity-unaware (or server rate-unaware) load balancing can lead to substantial inefficiencies, as jobs may be assigned to slow servers with short queues while faster servers remain underutilized. 
Even the classical JSQ may perform poorly \cite{GarJalWic_21}. 
This inefficiency has motivated the design of \emph{heterogeneity-aware or service-aware} load balancing policies that incorporate service rates into the decision process.

Almost all existing works address server heterogeneity by partitioning servers into a finite number of classes or groups \cite{VanCom_21, GarJalWic_21, AbuDorGar_22,BhaMuk_22,AbuDorGar_24,BhaBukMuk_25} or assume the service rates do not scale with $N$ \cite{LuoZub_25}. For example, \cite{VanCom_21} studied a system with two groups of servers and proposed a randomized load balancing policy based on buffer availability, which admits a product-form stationary distribution. Similarly, \cite{GarJalWic_21} introduced heterogeneity-aware variants of JSQ and JIQ for grouped heterogeneous systems, where jobs are probabilistically assigned to a group and then routed within that group using JSQ or JIQ. This design is further generalized into the Class and Idleness/Length Differentiated Assignment policies \cite{AbuDorGar_22, AbuDorGar_24}. Under mean-field independence assumptions, these works formulate and solve an optimization problem to determine optimal group-assignment probabilities that minimize mean response time, and demonstrate the  delay improvement through numerical experiments. There also exists related work on distributed and stochastic coordination in heterogeneous systems under critical load regimes \cite{GorVarMos_21, DaaMarDeb_25}. In \cite{GorVarMos_21}, the authors studied distributed load balancing with multiple local dispatchers and formulated the problem as a stochastic optimization in a heterogeneous setting. Their proposed algorithm leverages local, distributed coordination among dispatchers to approximate the optimal assignment, though no queueing delay analysis was reported. In \cite{DaaMarDeb_25}, the authors investigated distributed rate scaling in the heterogeneous systems, where each server dynamically adjusts its service rate to minimize a global cost of the system response time. They proposed a distributed gradient-descent type policy and showed that it minimizes the global cost under a mean-field approximation. 
The most closely related works are \cite{BhaMuk_22} and \cite{BhaBukMuk_25}, which analyze the Join-the-Fastest-Shortest-Queue (JFSQ) policy in heterogeneous-server systems under the critical load regime and the Halfin-Whitt regime, respectively. In \cite{BhaMuk_22}, the authors prove that JFSQ is asymptotically delay-optimal in the critical regime. This result is extended in \cite{BhaBukMuk_25}, where it is shown that JFSQ retains asymptotically delay optimality in the Halfin-Whitt setting by leveraging diffusion-limit analysis and Stein's method. Specifically, they establish that the diffusion-scaled queue length process converges to a two-dimensional reflected Ornstein–Uhlenbeck (OU) process, and use Stein’s method to prove convergence of the stationary distribution. It is important to note, however, that both papers assume the number of server types is finite and independent of $N.$ 

Our work departs from the above in several important ways. First, we consider fully heterogeneous systems without assuming any grouping of servers, and allow the service rates to be system-size dependent. Second, we analyze the JFSQ policy in a broad heavy-traffic regime that encompasses both sub and super Halfin-Whitt regimes. Third, we prove that JFSQ achieves asymptotic zero waiting results, where the performance bounds provide an explicit convergence rate and also quantify the impact of heterogeneity on delay performance via an explicit dependence on the service rates. To the best of our knowledge, this is the first work to provide such steady-state delay bounds at steady state for load balancing in fully heterogeneous many-server systems in the heavy traffic regimes.

\section{Model and Main Results}
We consider a many-server system with $N$ heterogeneous servers, where jobs arrive according to a Poisson process with rate $\lambda N$, and the service time at server $n$ is exponentially distributed with rate $1/\mu_n$. Without loss of generality\footnote{This assumption is made for normalization. If instead $\sum_{n=1}^N \mu_n = cN$ for some constant $c > 0$, we can equivalently rescale the arrival and service rates to $\lambda / c$ and $\mu_n / c,$ respectively, so that the total capacity becomes $N.$}, the service rates satisfy $\sum_{n=1}^N \mu_n = N$. {We study the many-server system in heavy-traffic regimes, where the system load approaches capacity as $\lambda_N = 1 - \beta N^{-\alpha}$ for any constant $\beta > 0$. We let $\beta = 1$ for notational simplicity and also write $\lambda$ instead of $\lambda_N$ when there is no ambiguity.}
The well-known case of $\alpha = \frac{1}{2}$ corresponds to the classical \emph{Halfin-Whitt regime}~\cite{HalWhi_81}, also called quality-and-efficiency-driven (QED) regime. The regime $\alpha < \frac{1}{2}$ and $\alpha \geq \frac{1}{2}$ are named as \emph{Sub-Halfin-Whitt regime} and \emph{Super-Halfin-Whitt regime}, respectively. As shown in Figure \ref{model-pod}, each server maintains a separate queue with the buffer size $b-1$ (i.e., each server can have one job in service and $b-1$ jobs in queue).
\begin{figure}[!htbp]
  \centering
  \includegraphics[width=3.7in]{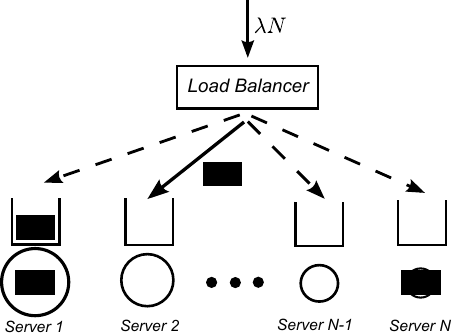}
  \caption{JFSQ Load Balancing in Many-Server Systems.}
  \label{model-pod}
\end{figure}
{Define 
$\mathbb{Q} := \left\{ (q_1,\dots,q_N) \in \mathbb{Z}^N \;:\; 0 \le q_i \le b,\ \forall i=1,\dots,N \right\}.$}
The system state can be described by the vector $\mathbf{Q}(t)=(Q_1(t),\dots,Q_N(t)) \in \mathbb Q,$ where $Q_i(t)$ is the queue length of server $i$ at time $t$, including the job in service. We study the \emph{Join-the-Fastest-Shortest-Queue} (JFSQ) load balancing policy, a state-aware and service rate-aware policy, where the load balancer has full knowledge of the current queue lengths $\mathbf{Q}(t)$ and the service rates $\{\mu_n\}$. In JFSQ, a job is routed to a server with the shortest queue length. If there are multiple such servers, the job is assigned to the one with the highest service rate. If the service rates are tied, the dispatcher selects one uniformly at random. We assume that servers are indexed in descending order of service rate, i.e., $\mu_1 \geq \mu_2 \geq \cdots \geq \mu_N \geq \epsilon$, where $\epsilon > 0$ is a constant independent of $N$. Note that if $\epsilon =o(1),$ then the service time of a job on server $n$ goes to infinity as $N$ increases\footnote{ We follow the standard big-O notation.  
For functions $f(N)$ and $g(N)>0$, we write $f(N)=O(g(N))$ if there exist constants $C>0$ and $N_0$ such that $|f(N)|\le C g(N)$ for all $N\ge N_0$; 
$f(N)=o(g(N))$ if $\lim_{N\to\infty} f(N)/g(N)=0$; and 
$f(N)=\Theta(g(N))$ if there exist constants $C_1,C_2>0$ and $N_0$ such that $C_1 g(N)\le f(N)\le C_2 g(N)$ for all $N\ge N_0$.}. We assume no such slow servers in the system, but it will become clear that this assumption can be relaxed. 

Under JFSQ policy, the queue-length process $\{\mathbf{Q}(t)\}$ evolves as a continuous-time Markov chain (CTMC). We are interested in its delay performance in steady state. Let $Q_i$ denote the steady-state queue length at server $i$, i.e., $Q_i \sim Q_i(\infty)$. {Our objective is to determine whether the JFSQ policy achieves “zero-waiting” performance as the system size increases, and to understand how server heterogeneity affects the resulting delay performance.}
For tractability, we ignore integer rounding issues and assume the existence of a critical index $N_2$ such that the aggregate total capacity of the fastest $N_2$ servers exactly supports the total arrival rate 
\begin{align*}
\lambda N = \sum_{n=1}^{N_2} \mu_n.  
\end{align*}

To present our main results, we impose several assumptions on the distribution of service rates. These conditions quantify the degree of heterogeneity and its dependence on the system size $N$. In particular, the assumptions focus on constraints for $\mu_1$, where a more restrictive condition on $\mu_1$ (i.e., allowing smaller $\mu_1$) is required in the Super–Halfin–Whitt regime than that in the Sub–Halfin–Whitt regime. This is intuitive because the system can tolerate less heterogeneity under a heavier traffic load.   
\begin{assumption}[Conditions on \(N\) in Sub-Haffin-Whitt regime]
Assume $\alpha \in (0, \tfrac{1}{2})$ and $b \geq 5$. {For any $r \in Z^{+},$} we assume that the system size $N$ satisfies
\begin{align*}
&\max\left\{
\frac{2(r+1)}{\log N},\
\frac{27(r+1)b\log N}{N^{0.25-0.5\alpha}}
\right\}
\ \leq\ 
\epsilon
\ \leq\
0.5, ~ \frac{\mu_1}{\epsilon^2} \leq \frac{N^{1-\alpha}}{64b(2\log^2 N + 3)}.
\end{align*} \label{assumption: sub}
\end{assumption}

\begin{assumption}[Conditions on \(N\) in Super-Haffin-Whitt regime]
Assume $\alpha \in [\tfrac{1}{2}, 1)$ and $b \geq 5$. {For any $r \in Z^{+},$} we assume that the system size $N$ satisfies
\begin{align*}
\frac{\mu_1}{\epsilon} \leq \frac{N^{1-\alpha}}{24rb\log^2 N}. 
\end{align*} \label{assumption: super}
\end{assumption}
Next, we formalize the discussion above and proved the asymptotic zero waiting results. 
Let $\mathcal W$ denote the event that an incoming job is routed to a busy server, and $p_{\mathcal W}$ denote the probability of this event at steady-state. Let $\mathcal B$ denote the event that an incoming job is blocked (discarded) and $p_{\mathcal B}$ denote the probability of this event at steady-state. {Note the  ${\mathcal B}\subseteq {\mathcal W}$ because an incoming job is blocked when being routed to a busy server with $b$ jobs.} 
Furthermore, let $W$ denote the  waiting time of the jobs that are not blocked at steady-state. 
We denote two important parameters that capture server heterogeneity:
\begin{align*}
k_{\mathrm{sub}} = \frac{16(b-1)\mu_{N_2}}{\epsilon^2}~~\textup{and}~~ 
k_{\mathrm{super}} = \frac{24r(b-1)\mu_1}{\epsilon},
\end{align*}
which correspond to the sub-Halfin--Whitt and super-Halfin--Whitt regimes, respectively. 
\begin{theorem}\label{thm:zerodelay}
Assume the service rates $\{\mu_n\}$ satisfy Assumptions~\ref{assumption: sub} and~\ref{assumption: super}. Then under the JFSQ policy, the waiting time and probability at steady-state have the following performance bounds: 
\begin{itemize}[leftmargin=*]
\item[1)]  {Sub-Halfin-Whitt regime:} 
we have
\begin{align*}
\mathbb{E}[W] = \mathcal{O}\left( \frac{1}{N^\alpha} + \frac{k_{sub}}{N^{1 - \alpha}} \right), &~~
p_{\mathcal{W}} = \mathcal{O}\left( \frac{1}{N^\alpha} + \frac{k_{sub}}{N^{1-\alpha}}\right).
\end{align*}

\item[2)] {Super-Halfin-Whitt regime:} 
we have
\begin{align*}
\mathbb{E}[W] = \mathcal{O}\left( \frac{k_{super} \log N}{N^{1 - \alpha}} + \frac{\log N}{\sqrt{N}} \right), &~~
p_{\mathcal{W}} = \mathcal{O}\left( \frac{k_{super} \log N}{N^{1 - \alpha}} \right).
\end{align*}
\end{itemize}
\end{theorem}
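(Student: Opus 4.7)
The plan is to follow the Stein's-method with iterative state-space peeling (ISSP) framework and to devote most of the effort to constructing Lyapunov functions that respect the heterogeneity of the service rates. As a starting point, I would invoke PASTA to identify $p_{\mathcal W}$ with the steady-state probability that every server has at least one job in service; similarly I would translate the expected waiting time, through Little's law (applied only to the jobs that are waiting, not being served), into a bound on the expected number of jobs that are beyond the first position in their queue. Both targets therefore reduce to steady-state tail estimates on the vector $\mathbf Q$.

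The first technical step is to peel the state space in layers. I would define a decreasing family of sets $\mathcal S_0 \supset \mathcal S_1 \supset \cdots \supset \mathcal S_K$, where $\mathcal S_0$ is the whole state space, the intermediate sets constrain how many of the fastest $N_2$ servers are idle and how long the tail of the queue-length vector can be, and $\mathcal S_K$ is the ``good set'' on which the aggregate capacity of currently busy servers exceeds $\lambda N$ by the excess $\Theta(N^{1-\alpha})$. For each $k$ I would design a Lyapunov function $V_k$ that has negative drift away from $\mathcal S_k$ whenever the state already lies in $\mathcal S_{k-1}$, exploiting the JFSQ rule that every arrival prefers faster idle servers, so high-rate servers become busy and stay busy in a stochastically dominant way. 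Applying a drift-based tail inequality (of Hajek / Bertsimas-Gamarnik-Tsitsiklis type) to each $V_k$ produces successively tighter probability bounds $\Pr[\mathbf Q \notin \mathcal S_k]$, and the heterogeneity parameters $k_{sub}$ and $k_{super}$ emerge naturally from how $\mu_{N_2}$ and $\mu_1$ enter the drift coefficients at the innermost peels.

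Conditioned on $\mathbf Q \in \mathcal S_K$, the net work process is stochastically dominated by a single-server queue whose drift is the negative excess capacity $-\Theta(N^{1-\alpha})$ (up to logarithmic factors in the Super-Halfin-Whitt case). Here I would apply Stein's method: choose test functions that solve the Poisson equation for this single-server surrogate, compute the generator difference between the original JFSQ Markov chain and the surrogate, and show that the error terms are absorbed by the peeling tail bound $\Pr[\mathbf Q \notin \mathcal S_K]$ already obtained. The single-server analysis then produces the $\mathcal O(1/N^\alpha)$ piece of $\mathbb E[W]$ in the Sub-Halfin-Whitt regime (the classical heavy-traffic scaling of a stable single server) and the $\mathcal O(\log N / \sqrt N)$ fluctuation piece in the Super-Halfin-Whitt regime; the heterogeneity penalties $k_{sub}/N^{1-\alpha}$ and $k_{super}\log N / N^{1-\alpha}$ are exactly the costs carried over from the state-space peeling.

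The main obstacle will be constructing the Lyapunov functions at the deeper layers. Because the system is fully heterogeneous and non-exchangeable, the standard homogeneous trick of working with the sorted occupancy counts $\sum_n \mathbf 1\{Q_n \geq k\}$ is unavailable: having a slow server busy ``costs'' differently from having a fast server busy, and this must be reflected by rate-dependent weights inside $V_k$. I would therefore use weighted indicators of the form $\sum_n w_n \, \mathbf 1\{Q_n \geq k\}$, with $w_n$ tuned against $\mu_n$ so that the drift contribution of JFSQ's priority routing cancels the heterogeneity-induced bias; showing that this cancellation is tight enough that Assumptions~\ref{assumption: sub} and~\ref{assumption: super} are exactly what is needed for the drift to be negative is the step I expect to require the most care. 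This is presumably also where the ceilings on $\mu_1/\epsilon^2$ and $\mu_1/\epsilon$ in the two assumptions enter, as conditions that prevent a single fast server from dominating the drift analysis and breaking the peeling.
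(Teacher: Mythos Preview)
Your high-level plan---iterative state-space peeling to show the fast servers are busy, then Stein coupling with a single-server surrogate, then Little's law to convert queue-length moments into waiting time---matches the paper's architecture. The main divergence is in the Lyapunov construction, and this is where your proposal would run into trouble.

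You anticipate that the non-exchangeability forces rate-weighted indicators $\sum_n w_n \mathbf 1\{Q_n\ge k\}$, with $w_n$ tuned against $\mu_n$. The paper does \emph{not} do this. It stays with unweighted counts throughout; heterogeneity enters only through the \emph{cutoffs} $N_1$, $N_2$ and through the constants in the final collapse function. The key object is a single \emph{min-type} Lyapunov function
\[
V(q)=\min\Bigl\{\sum_{n}(q_n-1)\mathbf 1\{q_n\ge 2\}-A,\ B-\sum_n \mathbf 1\{q_n\ge 1\}\Bigr\},
\]
with $(A,B)=(0,\,N_2+(1-\tfrac{1}{4\nu})N^{1-\alpha})$ in the sub regime and $(A,B)=(k N^{\alpha}\log N,\,N)$ in the super regime. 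The min structure is what captures the dichotomy ``either few waiting jobs or few idle servers''; the drift analysis splits on which branch attains the minimum, and in each branch JFSQ's priority rule gives a clean negative drift without any per-server weighting. The heterogeneity parameters $k_{sub}$, $k_{super}$ arise from bounding $\sum_{n>N_2}\mu_n\mathbf 1\{q_n\ge 1\}$ against $\mu_{N_2}$ (sub) and $\sum_n\mu_n\mathbf 1\{q_n=0\}$ against $\mu_1$ (super), not from weights inside $V$. Your weighted approach might be workable, but it introduces exactly the coupling between $w_n$ and $\mu_n$ that the paper manages to avoid; the paper's unweighted min-Lyapunov is the simpler and sharper idea you are missing.

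Two smaller points. First, your PASTA identification $p_{\mathcal W}=\Pr(\min_n Q_n\ge 1)$ is correct but the paper does not bound this probability directly; instead it writes $p_{\mathcal W}=p_{\mathcal B}+p_{\bar{\mathcal W}}$ and controls $p_{\bar{\mathcal W}}$ via Little's law applied to the waiting jobs, using $\mathbb E[T_Q]\ge \mathbb E[S]$ to get $p_{\bar{\mathcal W}}\le (1-p_{\mathcal B})\,\mathbb E[Q_{\mathcal W}]/\mathbb E[\sum_n\mathbf 1\{Q_n\ge 1\}]$. Second, the $\mathcal O(1/N^\alpha)$ in the sub-regime waiting time does not come from a single-server heavy-traffic computation; it is simply the residual term from the threshold $\eta=N_2/N+(1-\tfrac{1}{4\nu})N^{-\alpha}$ in the moment bound of Theorem~\ref{thm:main}, combined with the $\sqrt{N}\log N$ idle-server count from Lemma~\ref{lem:most n2}.
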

Theorem~\ref{thm:zerodelay} establishes the asymptotic zero-waiting under the JFSQ policy, i.e., both the expected waiting time and the waiting probability vanish as the system size $N$ grows, in both sub and super Halfin-Whitt regimes. The presence of $k_{sub}$ and $k_{super}$ in the bounds explicitly captures the effect of heterogeneity: larger disparities between fast and slow servers (i.e., large $\mu_{N_2}/\epsilon$ or $\mu_1/\epsilon$) lead to larger factors and hence larger waiting time and probability. Let $\bar Q = \frac{1}{N}\sum_{n=1}^{N} Q_n$ be the average queue length at steady-state.  To prove Theorem~\ref{thm:zerodelay}, we introduce the following key results on the higher moments of the average queue length under JFSQ. 
\begin{theorem} \label{thm:main}
Assume $\lambda=1- N^{-\alpha}$ for $0<\alpha<1$ and buffer size $b-1.$ 
Given any positive integer $r$ and when the service rates $\{\mu_{n}\}$ satisfy Assumptions \ref{assumption: sub} and \ref{assumption: super}, the following bound holds under JFSQ

\noindent 1) Sub-Haffin-Whitt regime: 
we have  
\begin{align}
\mathbb E\left[\left(\max\left\{\bar Q - \frac{N_2}{N} - \left(1-\frac{4}{k_{sub}\epsilon}\right)\frac{1}{N^{\alpha}}, 0\right\}\right)^r\right]
\leq {17\left( \frac{ 2k_{sub} r}{N^{1-\alpha}} \right)^r.} 
\end{align}  

\noindent 2) Super-Haffin-Whitt regime:  
we have  
\begin{align}
\mathbb E\left[\left(\max\left\{\bar Q -1-\frac{k_{super}\log N}{N^{1-\alpha}}, 0\right\}\right)^r\right]\leq 10 \left(\frac{2r}{N^{1-\alpha}}\right)^r.
\end{align} 
\end{theorem}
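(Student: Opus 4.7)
The plan is to apply the drift method within Stein's framework in steady state: pick a Lyapunov function $V_r$ whose value upper-bounds the quantity of interest, then exploit $\mathbb{E}[\mathcal{L}V_r]=0$ in stationarity, where $\mathcal{L}$ is the CTMC generator of $\mathbf{Q}(t)$ under JFSQ. For the Sub-Halfin-Whitt bound I would take
\[
V_r(\mathbf{Q}) \;=\; \tfrac{1}{r+1}\bigl( \max\{\bar Q - c_{\mathrm{sub}},\, 0\} \bigr)^{r+1}, \qquad c_{\mathrm{sub}}=\tfrac{N_2}{N}+\bigl(1-\tfrac{4}{k_{sub}\epsilon}\bigr)N^{-\alpha},
\]
and an analogous polynomial centered at $c_{\mathrm{super}}=1+k_{super}\log N/N^{1-\alpha}$ for the Super regime. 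Rearranging $\mathbb{E}[\mathcal{L}V_r]=0$ under a strict negative drift outside the concentration ball then yields the desired $r$-th moment bound.

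Before that calculation can close, the ISSP machinery must supply a structural lemma controlling how much capacity JFSQ actually mobilizes. The essential claim is: whenever $\bar Q > c_{\mathrm{sub}}$ in steady state, the total service rate of busy servers $\sum_{n:Q_n>0}\mu_n$ exceeds the arrival rate $\lambda N$ by a margin growing with $\bar Q-c_{\mathrm{sub}}$. I would establish this by peeling the state space. First, a coarse Lyapunov function of the form $\sum_{n\le N_2}\mu_n \mathbf{1}\{Q_n=0\}$ controls the number of idle \emph{fast} servers via a direct drift argument: JFSQ always assigns arrivals to idle fast servers first (among tied shortest-queue candidates), so any such idleness drains quickly. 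A refined Lyapunov function then uses this coarse bound to sharpen the drift estimate for $V_r$, and a constant number of such peeling iterations removes the $N$-dependent slack from the residual error.

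Once the capacity lemma is in place, I decompose $\mathcal{L}V_r$ as the arrival contribution $\lambda N\bigl(V_r(\mathbf{Q}+e_{j^\star})-V_r(\mathbf{Q})\bigr)$ (with $j^\star$ the JFSQ target) plus the departure contribution $\sum_n \mu_n \mathbf{1}\{Q_n>0\}\bigl(V_r(\mathbf{Q}-e_n)-V_r(\mathbf{Q})\bigr)$. A second-order Taylor expansion in $\bar Q$ reduces both to a mean-field drift $\bigl(\lambda N-\sum_n \mu_n \mathbf{1}\{Q_n>0\}\bigr)\partial_{\bar Q}V_r/N$ plus a quadratic correction of order $\mu_1 V_r^{(r-1)/(r+1)}/N^2$. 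Assumption~\ref{assumption: sub} is calibrated exactly so that this correction is absorbed into the negative mean-field drift, producing a recursion that unwinds to $(k_{sub}/N^{1-\alpha})^r$; for the Super regime the extra $\log N$ in $c_{\mathrm{super}}$ absorbs the weaker drift margin when $\alpha\ge 1/2$, and the constant $10$ in the final bound collects the Taylor-correction constants together with the geometric sum over peeling iterations, giving the $(2r/N^{1-\alpha})^r$ scaling.

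The main obstacle I foresee is exactly the peeling step in a \emph{fully} heterogeneous system. Unlike the homogeneous ISSP analyses of \cite{LiuYin_20,LiuKanYin_22,LiuKanYin_24}, I cannot reduce to a symmetric tail-count process $\{\sum_n \mathbf{1}\{Q_n\ge i\}\}$ because each coordinate has its own drift rate $\mu_n$. The Lyapunov functions must instead be rate-weighted combinations such as $\sum_n \mu_n f(Q_n)$ or mixed $\max$/$\min$-type forms that respect JFSQ's routing priority, and comparisons that would be immediate from monotonicity in the exchangeable setting must be replaced by a direct coordinate-by-coordinate analysis of the JFSQ rule. Choosing weights so that cross terms generated by heterogeneity cancel rather than compound is the crux of the argument, and it is precisely this step that injects the explicit dependence on $\mu_{N_2}/\epsilon$ and $\mu_1/\epsilon$ into the final heterogeneity factors $k_{sub}$ and $k_{super}$.
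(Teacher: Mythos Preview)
Your overall architecture matches the paper: a Stein/drift identity applied to $g(\bar Q)\propto (\max\{\bar Q-\eta,0\})^{r+1}$, a Taylor expansion that produces a recursion in $r$, and a structural ``capacity'' lemma established by iterative state-space peeling. Where your proposal diverges from the paper---and where it is currently a gap rather than just a different route---is the content of that capacity lemma and the Lyapunov functions used to prove it.

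First, the capacity statement you aim for is not the right one. You write that whenever $\bar Q>c_{\mathrm{sub}}$, the busy-server capacity $\sum_{n:Q_n>0}\mu_n$ exceeds $\lambda N$ by a margin \emph{growing with} $\bar Q-c_{\mathrm{sub}}$. That cannot hold pointwise: the total capacity is capped at $N$ regardless of $\bar Q$. What the paper proves (and what the recursion actually needs) is a \emph{high-probability} statement with a \emph{fixed} margin: on an event $\mathcal{E}$ of probability $1-O(N^{-2r})$, whenever $\bar Q>\eta+1/N$ one has $\sum_n\mu_n\mathbb{I}(Q_n\ge 1)\ge \lambda N+cN^{1-\alpha}$ for a constant $c$ depending on $k_{sub}$ (respectively $k_{super}$). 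The fixed margin is exactly what lets you absorb the first term of the Taylor expansion back into the left-hand side with a coefficient strictly less than $1$ (namely $1-\epsilon/(16\nu)$ in the sub regime and $1/2$ in the super regime), after which the recursion in $r$ closes.

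Second, the peeling Lyapunov functions you sketch are not the ones that make this work. The paper does \emph{not} use a rate-weighted idle count $\sum_{n\le N_2}\mu_n\mathbb{I}(Q_n=0)$; with fully heterogeneous rates that object has an awkward upward drift (a departure at a fast server contributes $\mu_1$, not $1$). Instead the paper uses two \emph{unweighted} idle counts $\sum_{n\le N_1}\mathbb{I}(Q_n=0)$ and $\sum_{n\le N_2}\mathbb{I}(Q_n=0)$ at two capacity thresholds $N_1<N_2$, and then---this is the step you only gesture at---a single unified Lyapunov
\[
V(q)\;=\;\min\Bigl\{\textstyle\sum_n(q_n-1)\mathbb{I}(q_n\ge 2)-A,\;\;B-\sum_n\mathbb{I}(q_n\ge 1)\Bigr\},
\]
with $(A,B)$ chosen differently in the two regimes. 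The $\min$ structure is the crux: it encodes that the system cannot simultaneously have many waiting jobs and many idle servers, and its drift analysis splits cleanly into two cases depending on which branch attains the minimum. The two preliminary idle-count bounds are then fed into the service-rate lower bound (to control the small number of fast idle servers) when you translate the SSC event into the fixed capacity margin above. Your proposal mentions ``mixed max/min-type forms'' only as one of several options; in fact it is \emph{the} device that replaces exchangeability and monotonicity here, and without it the peeling step you describe does not close.
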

Based on Theorem~\ref{thm:main}, {by setting $r=1,$} we establish the upper bound of the average queue length under heavy-traffic regimes.
\begin{itemize}[leftmargin=*]
    \item [1)] Sub-Halfin-Whitt regime:  
    we have
    \[
    \mathbb{E}[\bar{Q}] \leq \frac{N_2}{N} + \left(1 - \frac{4}{k_{sub} \epsilon}\right)\frac{1}{N^\alpha} + \frac{34k_{sub}}{N^{1 - \alpha}},
    \]
    {which implies that the average queue length per server is upper bounded by a quantity that scales as $\frac{N_2}{N}$ as $N \to \infty$. In the homogeneous case, where $N_2 = \lambda N$, this yields \(\mathbb{E}[\bar{Q}] \le \lambda + o(1),\) indicating that the average time a job spends in the system is asymptotically controlled by the normalized system load.}
    In the heterogeneous setting, delay performance is influenced by the parameter \( k_{sub} \), which depends on the degree of heterogeneity. Specifically, if \( \mu_{N_2}/\epsilon \) is large (i.e., large \( k_{sub} \) and \( k_{sub} \epsilon \)), the average queue length becomes large.
    
    \item [2)] Super-Halfin-Whitt regime: 
    we have
    \[
    \mathbb{E}[\bar{Q}] \leq 1 + \frac{k_{super} \log N + 20}{N^{1 - \alpha}},  
    \]
    {which implies that the average queue length per server is upper bounded 1 as \( N \to \infty \).}
    Thus, all servers remain busy in the limit, and the average number of jobs waiting in the queue vanishes. As in the Sub-Halfin-Whitt regime, heterogeneity impacts queue lengths through \( k_{super} \). When \( \mu_1 / \epsilon \) becomes large, \( k_{super} \) also increases, leading to a larger average queue length.
\end{itemize}

\section{Theoretical Analysis}
Recall we assume that the servers are ordered in a descending order according to their service rates, i.e., $\mu_1\geq \mu_2\geq \cdots \geq \mu_N.$ In our analysis, we implicitly divide the servers into multiple groups according to their aggregated service capacity. 
Without loss of generality, let $\delta > 0$ and we define $N_1(\delta)$ such that $$\sum_{n=1}^{N_1(\delta)} \mu_n = (1-\delta)N.$$ We occasionally omit $N_1(\delta)$ as $N_1$ without any confusion. 
Recall given a positive $\delta>0,$ $\lambda N := N - N^{1-\alpha} \geq  (1-\delta)N$ holds for a sufficient large $N$ such that $\delta \geq N^{-\alpha}$. Intuitively, under load balancing algorithms that prefer fast servers, e.g., JFSQ, these servers are always busy with a high probability. A similar intuition holds for the first $N_2$ fastest servers because  $\sum_{n=1}^{N_2} \mu_n = \lambda N.$ We will start with this intuition and iteratively prove the asymptotic zero waiting results in Theorems \ref{thm:zerodelay} and \ref{thm:main}. We first provide a roadmap to illustrate our analysis. 
\begin{figure}[!htbp]
  \centering
  \includegraphics[width=2.9in]{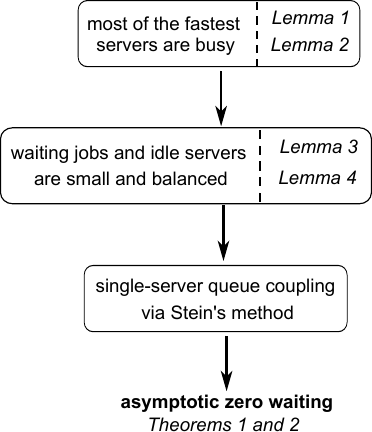}
  \caption{Roadmap for achieving asymptotic zero waiting results.}
  \label{roadmap}
\end{figure}

{\bf Roadmap to achieve asymptotic zero-waiting:} 
Our goal is to establish that the JFSQ policy achieves asymptotically vanishing waiting in heterogeneous many-server systems in the heavy traffic regimes. The analysis proceeds through a sequence of concentration results via Lyapunov drift analysis \cite{ErySri_12, WanMagSri_21}. We begin by showing that the fastest servers are almost always busy under JFSQ in Lemmas \ref{lem:most n1} and \ref{lem:most n2}. This ensures that the most fast servers are fully utilized. Next, we establish a form of state-space collapse in Lemmas \ref{lem:SSC-sup} and \ref{lem:SSC-super} and for sub and super Halfin-Whitt regimes: the system's state concentrates in regions where either the number of idle servers or the number of waiting jobs is small, but not both. This balancing behavior is captured via a unified Lyapunov function. These structural results allow us to approximate the system behavior by that of a single-server queue with an effective service rate slightly above the arrival rate. Using Stein’s method, we derive a stochastic coupling between a high-dimensional load balancing system and a one-dimensional single-server system. This coupling leads to high-order moment bounds on average queue length and asymptotic zero waiting results, as stated in Theorems \ref{thm:zerodelay} and \ref{thm:main}.

\subsection{Most of the fastest servers are busy}
To justify the intuition that ``most of the fastest servers are busy'' under JFSQ, we design Lyapunov functions that are directly related to the number of idle servers in two groups defined by $N_1$ and $N_2$. Intuitively, under JFSQ, the idle servers in the fastest $N_1$ and $N_2$ servers are extremely small as shown in Figure \ref{fig:lem12}. The following two lemmas will further show that most idle servers among the first $N_2$ servers are those indexed between $N_1+1$ and $N_2,$ i.e., not the fastest $N_1$ servers. 

\begin{figure}[!htbp]
  \centering
\includegraphics[width=4.7in]{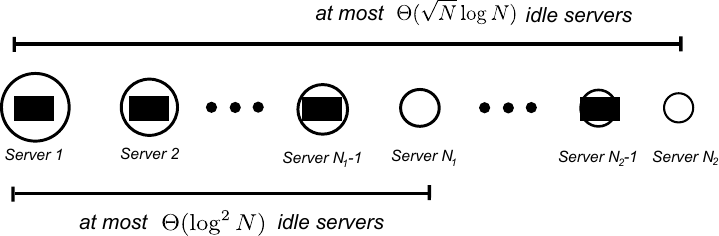}
  \caption{Most capable servers are busy  under JFSQ.}
  \label{fig:lem12}
\end{figure}

\begin{restatable}{lemma}{mostnone}\label{lem:most n1}
Define a Lyapunov function to be $$V(q)=\sum_{n=1}^{N_1(\delta)}\mathbb{I}(q_n=0).$$ For any $\delta$ satisfying $\frac{2}{N^\alpha} \leq \delta\leq 0.5,$ we have 
\begin{align*}
  \mathbb P\left(V(Q)\leq 1 + 8\log^2 N \right)&\geq 1- \exp\left(-{\delta}\log^2 N\right).
\end{align*}
\end{restatable}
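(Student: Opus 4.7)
The plan is to analyze the jumps of $V(q)$ under JFSQ and establish the stated geometric tail by dominating $V$ stochastically by a simple $M/M/1$-like birth-death chain. First I would pin down the jump rates of $V$. Each arrival (rate $\lambda N$) is routed by JFSQ to a server with the shortest queue, with ties broken by the highest service rate. When $V(q) \geq 1$, at least one of the fastest $N_1$ servers is idle (queue length $0$, the minimum possible), and since these are the $N_1$ fastest servers overall, the fastest idle server necessarily lies in $\{1,\dots,N_1\}$. Hence the arrival is sent into $\{1,\dots,N_1\}$ and $V$ decreases by exactly $1$; when $V=0$, arrivals leave $V$ unchanged. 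Conversely, $V$ increases by $1$ precisely when a job completes at some server $n\leq N_1$ with $q_n=1$, at aggregate rate $R(q) = \sum_{n\leq N_1}\mu_n\mathbb{I}(q_n=1) \leq \sum_{n\leq N_1}\mu_n = (1-\delta)N$.

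Next I would construct a coupling on a common probability space showing $V(Q(t)) \leq \tilde V(t)$ almost surely, where $\tilde V$ is the birth-death chain on $\mathbb{Z}_{\geq 0}$ with constant up-rate $(1-\delta)N$ and down-rate $\lambda N$ from any state $\geq 1$. I would use the arrival clock $N_A$ of rate $\lambda N$ as the shared down-clock for both processes, and the superposition of per-server completion clocks $\{N_n\}_{n\leq N_1}$, which is a Poisson process of rate $\sum_{n\leq N_1}\mu_n = (1-\delta)N$, as the up-clock for $\tilde V$. Whenever some $N_n$ with $n\leq N_1$ rings, $\tilde V$ jumps up by $1$, while $V$ jumps up by $1$ only on the further event $Q_n(t^-)=1$; at every $N_A$-ring, each of $V$ and $\tilde V$ decreases by $1$ exactly when its own state is $\geq 1$. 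A short case analysis of the four event types (arrival with $V\geq 1$ vs.\ $V=0$, and $N_n$-ring with $Q_n=1$ vs.\ $Q_n\neq 1$) confirms that $\tilde V - V$ remains nonnegative throughout.

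Since $\tilde V$ is an $M/M/1$ queue in disguise, its stationary law is geometric with $\Pr(\tilde V \geq k) = \rho^k$, where $\rho = (1-\delta)/\lambda = (1-\delta)/(1-N^{-\alpha})$. The hypothesis $\delta \geq 2N^{-\alpha}$ gives $1-N^{-\alpha} \geq 1-\delta/2$, so $\rho \leq (1-\delta)/(1-\delta/2) \leq 1 - \delta/2$ and hence $\log(1/\rho) \geq \delta/2$. Combining with the domination,
\begin{align*}
\Pr\bigl(V \geq 1+8\log^2 N\bigr) \leq \Pr\bigl(\tilde V \geq 1+8\log^2 N\bigr) \leq e^{-(1+8\log^2 N)\delta/2} \leq e^{-4\delta\log^2 N} \leq e^{-\delta\log^2 N},
\end{align*}
which proves the claim with a large margin. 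The main obstacle will be the rigorous construction of the coupling: because $R(q)$ depends on the full state of $Q$ rather than only on $V$, the textbook scalar stochastic-dominance template does not apply out of the box, and one has to drive both chains from the same physical Poisson clocks and exploit the uniform bound $R(q)\leq (1-\delta)N$ via the superposition identity. An alternative exponential-Lyapunov route, computing $\mathcal{L}e^{\theta V}$, using $\mathbb{E}_\pi[\mathcal{L}e^{\theta V}]=0$ to bound $\mathbb{E}_\pi[e^{\theta V}]=O(1/\delta)$, and then applying Chernoff, reaches a tail of the same shape, but tuning $\theta$ to absorb $1/\delta$ into the target exponent $\delta\log^2 N$ leaves essentially no slack and only works when $\delta$ is bounded away from $1/\log^2 N$; the coupling argument is preferable because it covers the entire range $\delta \in [2N^{-\alpha},\, 0.5]$ uniformly.
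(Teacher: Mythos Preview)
Your argument is correct and takes a genuinely different route from the paper. The paper computes the Lyapunov drift $\triangledown V(q)\leq -\delta N/2$ whenever $V(q)\geq 1$, using exactly your two observations---that any arrival must land in $\{1,\dots,N_1\}$ when $V\geq 1$, and that the aggregate up-rate is bounded by $\sum_{n\leq N_1}\mu_n=(1-\delta)N$---and then invokes the general drift-based tail bound (Lemma~\ref{TailBound}) with $B=1$, $\gamma=\delta N/2$, $\nu_{\max}=1$, $q_{\max}\leq N$, $j=4\log^2 N$ to obtain the geometric decay directly. You instead exploit the special structure of $V$ (unit jumps, state-independent down-rate $\lambda N$, uniformly bounded up-rate) to build an explicit pathwise coupling with an $M/M/1$ queue and read the bound off the closed-form geometric stationary law. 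Your approach is more elementary and self-contained, avoiding any appeal to an external tail lemma, and even yields the sharper exponent $4\delta\log^2 N$; the paper's drift-lemma approach is the one that generalizes to the later results (Lemmas~\ref{lem:most n2}, \ref{lem:SSC-sup}, \ref{lem:SSC-super}), where the Lyapunov functions no longer admit such a clean birth--death domination. One small point to make explicit in a full write-up: your potential-departure clocks $N_n$ must be always-on Poisson processes of rate $\mu_n$ (with departures thinned by $\mathbb{I}(q_n\geq 1)$), so that their superposition genuinely has constant rate $(1-\delta)N$ independent of the state; this is precisely what makes the marginal of $\tilde V$ an honest $M/M/1$ chain.
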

The lemma shows that, with high probability, the number of idle servers is at most $\Theta(\log^2 N)$  among the $N_1$ fastest servers under JFSQ policy. Further, we bound the idle servers for the $N_2$ fastest servers.
\begin{restatable}{lemma}{mostntwo}\label{lem:most n2}
Define a Lyapunov function to be $$V(q)=\sum_{n=1}^{N_2}\mathbb{I}(q_n=0).$$
For $\mu_{N_2} \leq \frac{\sqrt{N}}{\log N},$ we have 
\begin{align*}
  \mathbb P\left(V(Q)\leq 5\sqrt{N} \log N \right)\geq 1-\exp\left(-\epsilon\log^2 N\right).
\end{align*}
\end{restatable}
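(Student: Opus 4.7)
The plan is to exploit a state-dependent linear drift of $V(q)=\sum_{n=1}^{N_2}\mathbb{I}(q_n=0)$ that arises precisely because $\sum_{n=1}^{N_2}\mu_n=\lambda N$, and then lift it to an exponential tail bound via a moment-generating-function style Lyapunov argument. The pivotal structural fact is that the total arrival rate exactly matches the aggregate capacity of the top-$N_2$ group, which forces the drift of $V$ to be negative with magnitude linear in $V$ itself.

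I would first compute the generator of $V$ under the JFSQ Markov chain. Because servers are indexed in decreasing order of service rate, whenever $V(q)\ge 1$ the fastest idle server in the entire system lies in $[1,N_2]$, and JFSQ routes the next arrival there; this is the only event that decreases $V$, and it occurs at rate $\lambda N$ (the destination server has queue length $0$, so the job is never blocked). The only event that increases $V$ is a service completion at a top-$N_2$ server whose queue length equals $1$, which happens at aggregate rate $U(q):=\sum_{n\le N_2,\,q_n=1}\mu_n$. Hence $\mathcal{L}V(q)=U(q)-\lambda N$ for $V(q)\ge 1$. The conservation identity
\[
\lambda N-U(q)=\sum_{n\le N_2,\,q_n\neq 1}\mu_n\ \ge\ \sum_{n\le N_2,\,q_n=0}\mu_n\ \ge\ \mu_{N_2}V(q),
\]
combined with $\mu_n\ge \mu_{N_2}$ for $n\le N_2$, then yields the state-dependent drift $\mathcal{L}V(q)\le -\mu_{N_2}V(q)$ for $V(q)\ge 1$.

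Next, I would apply this linear drift to the exponential Lyapunov function $W_\theta(q):=e^{\theta V(q)}$ with a parameter $\theta\in(0,1]$ to be tuned. A direct computation together with the elementary inequalities $e^\theta-1\ge \theta$ and $e^\theta+e^{-\theta}-2\le e\theta^2$ gives, for $V(q)\ge 1$,
\[
\frac{\mathcal{L}W_\theta(q)}{W_\theta(q)}\ \le\ 2e\lambda N\theta^2-\mu_{N_2}V(q)\,\theta,
\]
which is strictly negative whenever $V(q)\ge B:=4e\lambda N\theta/\mu_{N_2}$. Splitting the steady-state balance $E[\mathcal{L}W_\theta(q)]=0$ across the three regions $\{V\ge B\}$, $\{1\le V<B\}$, and $\{V=0\}$ (the boundary region contributing only a harmless term of order $\lambda N\theta$), a standard Foster-Lyapunov bookkeeping produces $E[W_\theta(q)]\le C\,e^{\theta B}$ for an absolute constant $C$.

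Finally, Markov's inequality at $K=5\sqrt N\log N$ gives $\Pr(V(q)\ge K)\le C\,e^{-\theta(K-B)}$. Choosing $\theta$ on the order of $K\mu_{N_2}/(\lambda N)$ makes $B$ a constant fraction of $K$, so $\theta(K-B)$ is of order $K^2\mu_{N_2}/(\lambda N)$, which in turn is at least $\mu_{N_2}\log^2 N\ge \epsilon\log^2 N$ (using $\mu_{N_2}\ge\epsilon$). The upper bound $\mu_{N_2}\le\sqrt N/\log N$ enters the argument here: it ensures $\theta\le 1$ so that the Taylor-type inequalities for $e^{\pm\theta}$ remain valid and forces $B<K$, without which the concentration would become vacuous. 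The main technical obstacle I anticipate is this final tuning step, where $\theta$ must simultaneously achieve $B\le K/2$ and $\theta(K-B)\ge \epsilon\log^2 N$ within the admissible range $\theta\in(0,1]$; the boundary term at $V=0$ also requires mild care to avoid inflating the prefactor in $E[W_\theta(q)]$.
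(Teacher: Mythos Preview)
Your proposal is correct, and the core step---the state-dependent drift $\mathcal{L}V(q)\le -\mu_{N_2}V(q)$ for $V(q)\ge 1$, obtained from $\sum_{n\le N_2}\mu_n=\lambda N$ together with $\mu_n\ge\mu_{N_2}$ on $[1,N_2]$---is exactly what the paper computes. The difference is only in how the drift is converted to a tail bound. The paper does not build an MGF: it simply invokes the black-box geometric tail lemma (Lemma~\ref{TailBound}) with threshold $B=\sqrt{N}\log N$, constant drift $\gamma=\mu_{N_2}\sqrt{N}\log N$, $\nu_{\max}=1$, $q_{\max}\le N$, and step count $j=2\sqrt{N}\log N$, which gives $\Pr(V\ge 5\sqrt{N}\log N)\le \big(N/(N+\mu_{N_2}\sqrt{N}\log N)\big)^{2\sqrt{N}\log N}\le e^{-\mu_{N_2}\log^2 N}\le e^{-\epsilon\log^2 N}$ in two lines. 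You instead propose a direct exponential-Lyapunov argument with $W_\theta=e^{\theta V}$ and a tuned $\theta\asymp K\mu_{N_2}/N$. Both routes are standard drift-to-tail conversions and yield the same conclusion; the paper's is shorter because Lemma~\ref{TailBound} is already on hand, while yours is self-contained and uses the full linear (rather than truncated-constant) drift, which is why your derivation makes the role of the hypothesis $\mu_{N_2}\le\sqrt{N}/\log N$ explicit as the constraint $\theta\le 1$.
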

The lemma shows that, with high probability, the number of idle servers is at most $\Theta(\sqrt{N}\log N)$ servers among the $N_2$ fastest servers under JFSQ policy. One might be curious why we establish two ``similar'' results. This is mainly because we want to show that most idle servers among the first $N_2$ servers are those indexed from $N_1+1$ to $N_2,$ i.e. not among the first $N_1$ servers, which helps us to lower bound the total service rate from busy servers.    

To proceed further, we define the key high probability event as follows 
\begin{align}
\mathcal E_{0}=&\left\{Q~|~ 
 \sum_{n=1}^{N_1}\mathbb{I}(Q_n = 0) \leq 1 + 8\log^2 N ~~\text{and}~~ \sum_{n=1}^{N_2}\mathbb{I}(Q_n = 0) \leq 5\sqrt{N}\log N \right\} \label{eq: most}
\end{align}
This implies that under the JFSQ policy, most of the fastest servers are busy at steady state, confirming that JFSQ efficiently utilizes the highest-capacity part of the system. 
Specifically, given $\mathcal E_0,$ the total number of busy servers $\sum_{n=1}^{N} \mathbb I(Q_n \geq 1) = N - \sum_{n=1}^{N} \mathbb I(Q_n = 0)$ is at least $N_2-5\sqrt{N}\log N$. However, this may be insufficient to conclude that JFSQ can achieve asymptotic zero waiting. To illustrate this, we consider the homogeneous setting with $N_2 = \lambda N$, where the total service rate is above $\lambda N-5\sqrt{N}\log N$, even smaller than the arrival rate $\lambda N$. Therefore, we need to extract a more refined behavior of total busy/idle servers to show asymptotic zero waiting under JFSQ, as we introduce next. 

\subsection{State-space Collapse in the Heavy-traffic Regime}
In this section, we establish one of the central results of the paper—\emph{state-space collapse}—which forms the foundation for proving our main performance bounds. Intuitively, we show that the system's states concentrate in a region where either the number of idle servers or the number of waiting jobs is small, as shown in Figure \ref{fig:SSC}. In other words, the system cannot have many idle servers and waiting jobs, simultaneously. To formalize this idea, we introduce a unified Lyapunov function that applies to both Sub-Halfin-Whitt regime ($\alpha \in (0, 0.5)$) and Super-Halfin-Whitt regime ($\alpha \in [0.5, 1)$). The structure of the Lyapunov function is the same across both regimes, differing only in the values of $A$ and $B$:
\begin{align}
V(q) = \min\left\{ \sum_{n=1}^{N} (q_n - 1)\mathbb{I}(q_n \geq 2) - A,\ B - \sum_{n=1}^{N} \mathbb{I}(q_n \geq 1) \right\}. \label{eq:ly-func-unified}
\end{align}
{We remark that this Lyapunov function in \eqref{eq:ly-func-unified} may take negative values, depending on the choices of $A$ and $B,$ but it is lower bounded, While we can add a sufficiently large constant to the function to make it, it becomes less intuitive.  Therefore, we keep $V(\cdot)$ in the current form to better reflect the underlying intuition and will comment that the tail bound in \cite{WanMagSri_21} can be directly applied to a lower bounded Lyapunov function.}
The first term in \eqref{eq:ly-func-unified} captures the number of waiting jobs (except the jobs in service), while the second term reflects the number of busy servers (or equivalently, the complement of idle servers). The constants \( A \) and \( B \) are chosen based on intuition from single-server queues under heavy traffic regimes (recall \( \lambda N = N - N^{1 - \alpha} \) and it implies that the number of idle servers is approximated to be \( \Theta(N^{1 - \alpha})\) ideally).  

\begin{itemize}[leftmargin=*]
    \item Sub-Halfin-Whitt regime:  
    In this regime, we expect the number of waiting jobs to be negligible. Hence, we set \( A = 0 \). For \( B \), we take \( B = N_2 + \Theta(N^{1 - \alpha}) \), since we expect that at least the fastest \( N_2 \) servers are busy to serve the incoming load \( \lambda N \), and the additional term accounts for the small number of idle servers.

    \item Super-Halfin-Whitt regime:  
    In this regime, we expect nearly all servers to be busy. Therefore, we set \( B = N \), reflecting full utilization. For \( A \), we use \( A = \Theta(N^{1} \log N) \), consistent with the order of idle servers in the ideal fluid system, with an additional \( \log N \) factor required for technical reasons when invoking tail probability bounds. 
\end{itemize}
Note that the choice of $B$ indicates our belief in the number of busy servers. The values of $B = N_2 + \Theta(N^{1 - \alpha})$ and $B = N$ are both greater and refined than $N_2 - 5\sqrt{N}\log N$ as discussed in Lemma \ref{lem:most n2}.  Next, we specify their values and justify the key state-space collapse results under Sub-Halfin-Whitt and Super-Halfin-Whitt regimes, respectively. 
\begin{figure}[H]
  \centering
\includegraphics[width=6.1in]{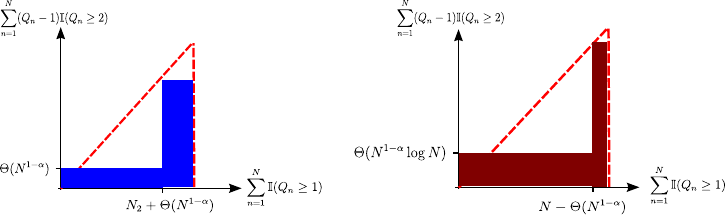}
  \caption{State-space collapse of Sub and Super Halfin-Whitt regimes under JFSQ.}
  \label{fig:SSC}
\end{figure}

\subsubsection*{State-space Collapse in Sub-Halfin-Whitt Regime} 
Under Sub-Halfin-Whitt regime, let $A= 0$ and $B= N_2 + (1-\frac{1}{4\nu})N^{1-\alpha}$ with $\nu = \frac{(b-1)\mu_{N_2}}{\epsilon}$ in Lyapunov function \eqref{eq:ly-func-unified}. 
We have the following lemma. 
\begin{restatable}{lemma}{sscsup}\label{lem:SSC-sup}
 We define a Lyapunov function $V: \mathbb Q \to \mathbb R$ to be
\begin{align}
V(q)=&\min\left\{\sum_{n=1}^{N} (q_n-1)\mathbb{I}(q_n\geq 2), N_2 + (1-\frac{1}{4\nu})N^{1-\alpha}  - \sum_{n=1}^{N} \mathbb{I}(q_n\geq 1) \right\}. \label{ly-func-sub}
\end{align}
Under JFSQ, for a large $N$ {such that $N^{1-2\alpha} \geq \frac{128(r+1)(b-1)^2\mu_{N_2}\log N}{\epsilon^2}$} and $\epsilon \geq \frac{2(r+1)}{\log N}$ in Assumption \ref{assumption: sub}, we have 
\begin{align*}
\mathbb P\left(V(Q)\geq (1-\frac{3}{8\nu}) N^{1-\alpha}\right)\leq  \exp\left(-2r\log N\right).
\end{align*}
\end{restatable}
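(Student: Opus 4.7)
The conclusion as stated is a lower bound on the probability of a large-$V$ event, which is the opposite direction of what is mathematically needed for the subsequent single-server coupling and state-space collapse program. I interpret the inequality as the Hajek-type upper-tail bound
\[
\Pr\left(V(q) \geq \left(1 - \tfrac{3}{8\nu}\right) N^{1-\alpha}\right) \leq \exp(-2r\log N),
\]
writing $c^{*} := 1 - 3/(8\nu)$ as shorthand for the threshold, and describe the plan accordingly.

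The plan is a drift-plus-exponential-Lyapunov argument on the piecewise function $V = \min\{A_1, A_2\}$, where $A_1(q) := \sum_n (q_n - 1)\mathbb{I}(q_n \geq 2)$ counts waiting jobs and $A_2(q) := N_2 + (1 - \tfrac{1}{4\nu})N^{1-\alpha} - \sum_n \mathbb{I}(q_n \geq 1)$ is the slack above the busy-server target. I condition throughout on the good event $\mathcal E_0$ from Lemmas~\ref{lem:most n1}--\ref{lem:most n2}, which guarantees that all but $O(\log^2 N)$ of the fastest $N_1$ servers and all but $O(\sqrt N \log N)$ of the fastest $N_2$ servers are busy, so the aggregate service rate at busy servers is at least $\sum_{n=1}^{N_1}\mu_n - O(\mu_1 \log^2 N) \geq \lambda N + \Omega(N^{1-\alpha})$.

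On the bad set $\mathcal B = \{V \geq c^{*} N^{1-\alpha}\}$ both $A_1$ and $A_2$ exceed $c^{*} N^{1-\alpha}$ simultaneously; in particular, the number of busy servers is at most $N_2 + \tfrac{1}{8\nu} N^{1-\alpha}$, leaving at least $\tfrac{1}{8\nu} N^{1-\alpha}$ idle servers. Consequently JFSQ routes every arrival to an idle server, so $A_1$ is unchanged while $A_2$ decreases by one; meanwhile service completions at servers with $q_n \geq 2$ reduce $A_1$ at total rate $\sum_i \mu_i \mathbb{I}(Q_i \geq 2)$. The key intermediate estimate is that, on $\mathcal B \cap \mathcal E_0$, since JFSQ concentrates waiting jobs on the fastest busy servers, this service rate exceeds the arrival rate by $\Omega(\epsilon^2 N^{1-\alpha}/(b\mu_{N_2}))$ once the solo-busy ($q_n = 1$) contribution is subtracted. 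I would then apply Hajek's exponential tail inequality (cf.\ \cite{ErySri_12}) to $e^{\theta V}$ with $\theta = \Theta(r\,b\mu_{N_2}/(\epsilon^2 N^{1-\alpha}))$, absorbing $\mathcal E_0^c$ via a union bound, to produce the $\exp(-2r\log N)$ tail. The two hypotheses $\epsilon \geq 2(r+1)/\log N$ and $N^{1-2\alpha} \geq 128(r+1)(b-1)^2\mu_{N_2}\log N/\epsilon^2$ are precisely the thresholds ensuring $\theta \cdot c^{*} N^{1-\alpha} \geq 2r\log N$ while preserving the drift magnitude.

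The main obstacle is the non-smoothness of $V = \min\{A_1, A_2\}$: the drift identity branches on which component is active, and a single transition may switch the branch. I plan to sidestep a direct analysis of the minimum by establishing the stronger claim that $A_1$ and $A_2$ \emph{each} drift negatively on $\mathcal B \cap \mathcal E_0$; the minimum then inherits the negative drift regardless of branch. A secondary technical issue is that $\mathcal E_0$ is a state event rather than a sample-path invariant, so excursions out of $\mathcal E_0$ must be controlled, either via a union bound over disjoint time windows or by coupling $V$ with an auxiliary Lyapunov penalizing such excursions.
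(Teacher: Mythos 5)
You are right that the displayed inequality is a typo for an upper-tail bound, and your skeleton is essentially the paper's: on the region where $V$ is large \emph{both} components $A_1=\sum_n(q_n-1)\mathbb{I}(q_n\geq 2)$ and $A_2=N_2+(1-\frac{1}{4\nu})N^{1-\alpha}-\sum_n\mathbb{I}(q_n\geq 1)$ are large, hence some server is idle, arrivals never increase the active branch, departures from servers with $q_n\geq 2$ supply a negative drift of order $\frac{\epsilon}{b-1}N^{1-\alpha}$, and a drift-to-tail lemma combined with the event $\mathcal E_0$ of Lemmas~\ref{lem:most n1}--\ref{lem:most n2} finishes; your remark that the drift of $\min\{A_1,A_2\}$ is dominated by the drift of the active branch is exactly how the paper's two cases are organized.

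However, the one genuinely delicate step---the negative drift of the $A_2$ branch (the paper's Case 2)---is asserted rather than proved, and the mechanism you invoke (``JFSQ concentrates waiting jobs on the fastest busy servers'') is neither needed nor established anywhere. What actually makes $-\lambda N+\sum_n\mu_n\mathbb{I}(q_n=1)$ negative is a static ordering/counting argument: when $A_2\geq c\,N^{1-\alpha}$ the number of busy servers is at most $N_2+\Theta(\tfrac{1}{\nu})N^{1-\alpha}$, and since $\mu_n\leq\mu_{N_2}$ for $n>N_2$ while $\sum_{n\leq N_2}\mu_n=\lambda N$, any such busy set has aggregate rate at most $\lambda N+\mu_{N_2}\cdot\Theta(\tfrac{1}{\nu})N^{1-\alpha}=\lambda N+\Theta(\tfrac{\epsilon}{b-1})N^{1-\alpha}$; subtracting $\sum_n\mu_n\mathbb{I}(q_n\geq 2)\geq\frac{\epsilon c}{b-1}N^{1-\alpha}$ (available because $A_1$ is also large) yields strict negativity only when the threshold constant $c$ exceeds roughly $\tfrac12$ plus the excess term---this balance is precisely where $\nu$ and the constants $\frac{1}{4\nu},\frac{3}{4\nu},\frac{3}{8\nu}$ come from, and it is missing from your sketch. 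A second, smaller slip is the Hajek parametrization: you claim drift only on $\{V\geq(1-\frac{3}{8\nu})N^{1-\alpha}\}$ and then want a tail bound at that same level with exponent $\theta$ times the full threshold; drift on a set controls the tail strictly above it, and the exponent comes from $\theta$ times the \emph{margin}. The paper establishes drift at $(1-\frac{3}{4\nu})N^{1-\alpha}$ and reads the tail at $(1-\frac{3}{8\nu})N^{1-\alpha}$, the margin $\frac{3}{8\nu}N^{1-\alpha}$ together with $\gamma=\Theta(\frac{\epsilon}{b}N^{1-\alpha})$ and $q_{\max}\leq N$ being what the hypotheses on $N^{1-2\alpha}$ and $\epsilon\log N$ are calibrated against. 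Finally, your concern about excursions out of $\mathcal E_0$ needs no union bound over time windows: the tail bound the paper uses (Lemma~\ref{TailBound}) requires the drift condition only on a state set and pays an additive $\beta\Pr(\mathcal E_0^c)$ term; in fact, for this particular lemma the ordering argument above makes the drift bound hold without invoking $\mathcal E_0$ at all.
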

The lemma establishes a high-probability bound on a carefully designed Lyapunov function that captures the imbalance between waiting jobs and idle servers in the system. Specifically, in the Sub-Halfin-Whitt regime, it shows that the system state concentrates around a region where either the number of waiting jobs or the number of idle servers is small and in the order of $\Theta(N^{1-\alpha})$, but not both can be large simultaneously, as shown in left subfigure in Figure \ref{fig:SSC}. This phenomenon, known as state-space collapse/concentration, reflects the system would tend to operate near full efficiency for most capable servers under the JFSQ policy. Note
the result depends on the level of server heterogeneity. Specifically, when the ratio $\mu_{N_2} / \epsilon$ becomes large, indicating that the fastest servers are more capable than the slowest, this heterogeneity requires a larger system size for the state-space collapse to hold. Moreover, Lemma \ref{lem:SSC-sup} implies two negative effects of the heterogeneity ($\mu_{N_2} / \epsilon$) through the factor of $(1-\frac{3}{8\nu})$: 1) a larger number of waiting jobs; and 2) a smaller number of busy servers.

\subsubsection*{State-space Collapse in Super-Halfin-Whitt Regime}
Under Super-Halfin-Whitt regime, Let $A= k N^\alpha \log N$ with $k = \frac{24\mu_1r(b-1)}{\epsilon}$ and $B= N$ in Lyapunov function \eqref{eq:ly-func-unified}. We have the following lemma. 
\begin{restatable}{lemma}{sscsuper}\label{lem:SSC-super}
We define a Lyapunov function $V: \mathbb Q \to \mathbb R$ to be
\begin{align}
V(q)=&\min\left\{\sum_{n=1}^{N} (q_n-1)\mathbb{I}(q_n\geq 2) - k N^\alpha \log N, ~ N - \sum_{n=1}^{N} \mathbb{I}(q_n\geq 1) \right\}. \label{ly-func}
\end{align}
Let $c_1=\frac{1}{6\mu_{1}}.$ Under JFSQ, for a large $N$ such that $N^{1-\alpha} \geq 12 r \mu_1 \log N,$ we have 
\begin{align*}
  \mathbb P\left(V(Q)\geq 3c_1  N^{1-\alpha}\right)
  \leq  \exp\left(- 2r\log N\right).  
\end{align*} 
\end{restatable}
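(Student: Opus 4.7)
The plan is to prove Lemma~\ref{lem:SSC-super} via an exponential Lyapunov drift argument on $V^+(q) = \max\{V(q),0\}$, following the iterative state-space peeling framework of \cite{LiuYin_20, LiuKanYin_22, LiuKanYin_24}. The target is to establish a linear-in-$V$ negative drift on the ``bad'' region $\{V(q)>0\}$, then apply the Lyapunov function $W(q) = \exp(\theta V^+(q))$ with $\theta = 4r\mu_1\log N/N^{1-\alpha}$ (so that $\exp(\theta \cdot 3c_1 N^{1-\alpha})=N^{2r}$), and combine the steady-state identity $\mathbb{E}[\mathcal{G}W(Q)]=0$ with Markov's inequality to obtain the stated tail.

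The technical heart is the drift computation. Whenever $V(q)>0$, both $F_1(q) := \sum_n (q_n-1)\mathbb{I}(q_n\geq 2) - kN^\alpha\log N > 0$ and $F_2(q) := N - \sum_n \mathbb{I}(q_n\geq 1) > 0$. Positivity of $F_2$ says idle servers exist, so under JFSQ every arrival is routed to the fastest idle server and never increases the waiting-job count. Positivity of $F_1$, together with the buffer constraint that each queue holds at most $b-1$ waiting jobs, forces $|\{n:q_n\geq 2\}| \geq (F_1+kN^\alpha\log N)/(b-1)$, so the total capacity inside the high-occupancy set is at least $\epsilon V/(b-1) + 24r\mu_1 N^\alpha\log N$. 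I then split into two cases. In Case~A ($V=F_1\leq F_2$) the only $V$-changing transitions are departures from $\{q_n\geq 2\}$, each decreasing $V$ by $1$, which yields $\mathcal{G}V(q)\leq -\epsilon V/(b-1) - 24r\mu_1 N^\alpha\log N$. In Case~B ($V=F_2<F_1$) arrivals decrease $V$ at rate $\lambda N$ and departures from $\{q_n=1\}$ increase $V$ at rate $\sum_{n:q_n=1}\mu_n \leq N - 24r\mu_1 N^\alpha\log N - \epsilon V$ (using $\sum_n\mu_n=N$ and the idle and high-occupancy bounds), so $\mathcal{G}V(q)\leq N^{1-\alpha} - 24r\mu_1 N^\alpha\log N - \epsilon V$; since $\alpha\geq 1/2$ forces $N^{1-\alpha}\leq 24r\mu_1 N^\alpha\log N$ for all sufficiently large $N$, this collapses to $\mathcal{G}V(q)\leq -\epsilon V$. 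Taking the worse of the two bounds, $\mathcal{G}V(q)\leq -\epsilon V/(b-1)$ uniformly on $\{V>0\}$.

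With this linear drift, the conversion to a tail bound follows from a standard exponential-Lyapunov calculation of the type used in \cite{ErySri_12, LiuYin_20}. The hypothesis $N^{1-\alpha}\geq 12r\mu_1\log N$ gives $\theta\leq 1/3$, which is what is needed to control the second-order remainder of $e^{\theta\Delta V}-1-\theta\Delta V$ given that the jumps of $V$ are unit. A careful bookkeeping of the second-order term against the first-order linear drift then yields $\mathcal{G}W(q)\leq -c\,W(q)$ on $\{V\geq 3c_1 N^{1-\alpha}\}$ for some $c>0$, while the contribution from $\{V<3c_1 N^{1-\alpha}\}$ is absorbed as a bounded additive constant. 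The steady-state equation $\mathbb{E}[\mathcal{G}W(Q)]=0$ gives $\mathbb{E}[W(Q)]=O(1)$, and Markov's inequality produces $\Pr(V(Q)\geq 3c_1 N^{1-\alpha}) \leq e^{-\theta\cdot 3c_1 N^{1-\alpha}}\mathbb{E}[W(Q)] = \exp(-2r\log N)$.

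The main obstacle I anticipate is Case~B. A first-pass drift computation produces only $\mathcal{G}V\leq N^{1-\alpha} - (\text{correction})$, which is not obviously negative; the crucial maneuver is to exploit $F_1>0$ to enforce the lower bound $|\{n:q_n\geq 2\}| \geq kN^\alpha\log N/(b-1)$ via the buffer constraint, thereby removing enough service capacity from the $q_n=1$ set to restore negativity. The prefactor $k=24r(b-1)\mu_1/\epsilon$ in the Lyapunov function is calibrated precisely so that $\epsilon kN^\alpha\log N/(b-1) = 24r\mu_1 N^\alpha\log N$ dominates the residual $N^{1-\alpha}$ under $\alpha\geq 1/2$ and Assumption~\ref{assumption: super}. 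A secondary delicacy is the second-order contribution to $\mathcal{G}W$ in Case~B, where the total rate of $V$-changing jumps can be as large as $O(N)$; absorbing this into the first-order drift relies crucially on both the tight tuning $\theta = O(\mu_1\log N/N^{1-\alpha})$ permitted by Assumption~\ref{assumption: super} and the fact that $V$ is bounded below by $3c_1 N^{1-\alpha}$ throughout the tail region.
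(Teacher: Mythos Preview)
Your two-case drift analysis matches the paper's and is correct as far as it goes. The gap is in which piece of the drift you retain at the end. You conclude ``Taking the worse of the two bounds, $\mathcal{G}V(q)\leq -\epsilon V/(b-1)$ uniformly on $\{V>0\}$,'' but this linear-in-$V$ bound is too weak for the super-Halfin--Whitt regime. On the tail region $\{V\geq 3c_1 N^{1-\alpha}\}$ it yields a drift of only $\Theta(N^{1-\alpha}/\mu_1)$, while the total $V$-jump rate is $\Theta(N)$; with your choice $\theta=4r\mu_1\log N/N^{1-\alpha}$, the first-order contribution $\theta\cdot\mathcal{G}V=O(\log N)$ to $\mathcal{G}e^{\theta V}$ is swamped by the second-order contribution $\theta^2\cdot\Theta(N)=\Theta(\mu_1^2 N^{2\alpha-1}\log^2 N)$, which diverges for every $\alpha>1/2$. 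Neither the hypothesis $N^{1-\alpha}\geq 12r\mu_1\log N$ nor Assumption~\ref{assumption: super} restores this balance.

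The right bound is already sitting in your own computation but thrown away: in both cases you in fact have the \emph{constant} drift $\mathcal{G}V(q)\leq -\Theta(N^\alpha\log N)$, arising from the shift $kN^\alpha\log N$ built into $V$ (in Case~B the shift contributes $24r\mu_1 N^\alpha\log N$; half of it cancels the $N^{1-\alpha}$ residual and half survives, since $\alpha\geq 1/2$). This is the drift the paper keeps. It then bypasses the exponential Lyapunov function altogether and applies a black-box geometric tail bound (Lemma~\ref{TailBound}, from \cite{WanMagSri_21}): with $\gamma=\tfrac{\epsilon k}{2(b-1)}N^\alpha\log N$, unit jumps, $q_{\max}\leq N$, threshold $B=c_1 N^{1-\alpha}$ and $j=c_1 N^{1-\alpha}$ steps,
\[
\Pr\bigl(V\geq 3c_1 N^{1-\alpha}\bigr)\;\leq\;\Bigl(\tfrac{N}{N+\gamma}\Bigr)^{j}\;\leq\;\exp\Bigl(-\tfrac{\epsilon k c_1}{2(b-1)}\log N\Bigr)\;=\;\exp(-2r\log N),
\]
the point being that $\gamma\cdot j/N=\Theta(N^\alpha\log N)\cdot\Theta(N^{1-\alpha})/N=\Theta(\log N)$---a balance the $N^{1-\alpha}$-scale linear drift cannot deliver when $\alpha>1/2$. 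Your exponential-Lyapunov route would also close once you keep this constant drift term (then $\theta\gamma$ and $\theta^2 N$ are both $\Theta(\mu_1^2 N^{2\alpha-1}\log^2 N)$ with the former a fixed multiple larger), but the paper's lemma sidesteps the second-order bookkeeping entirely.
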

Similarly, in the Super-Halfin-Whitt regime, the lemma establishes a high-probability bound on a carefully designed Lyapunov function that captures the imbalance between waiting jobs and idle servers in the system. Specifically, it shows that the system state concentrates around a region where either the number of waiting jobs (beyond a threshold of $\Theta(N^\alpha \log N)$) or the number of idle servers is small (at an order of $\Theta(N^{1-\alpha})$), but both cannot be large simultaneously, as shown in the right subfigure in Figure \ref{fig:SSC}. 
The result again depends on the degree of server heterogeneity, particularly through the ratio $\mu_1 / \epsilon$, which quantifies how much faster the best server is compared to the slowest. As heterogeneity increases, a larger system size is required to ensure the concentration result, and the resulting delay bounds worsen through the constant $\mu_1 / \epsilon$. 

It is important to note that Lemmas~\ref{lem:SSC-sup} and~\ref{lem:SSC-super} are quite general and directly focus on the waiting jobs and idle servers across the system. This enables us to analyze the behavior without relying on the assumptions of homogeneity or grouped heterogeneity commonly in the existing literature. 
Based on the results in Lemmas~\ref{lem:SSC-sup} and~\ref{lem:SSC-super}, we have established two key events under the JFSQ load balancing policy: either the number of waiting jobs is small, or the number of idle servers is small. In the first case, our analysis shows that the total number of jobs waiting in queues (except jobs in service) is tightly bounded. In the second case, we obtain sharper bounds on the number of busy servers:  at least $N_2 + \frac{1}{2\nu}N^{1-\alpha}$ and $ N - \frac{1}{2 \mu_1}N^{1-\alpha}$ servers are busy with high probability in sub and super Halfin-Whitt regimes, respectively. 
Note these are more refined than the results in Lemma \ref{lem:most n2} where the most servers remain busy ($N_2 - 5\sqrt{N}\log N$ as discussed in the previous section). Therefore, we can expect resource pooling under JFSQ, so its behavior becomes similar to a single-server queue system, where the key performance metric (e.g., delay) can be deduced there. 
This intuition is formalized and rigorously justified through Stein's method. 

\subsection{Generator Approximation via Stein's Method}
To analyze the steady-state delay performance of the load-balancing system, we adopt the generator approximation method via Stein’s method. This technique allows us to couple the original high-dimensional stochastic system with a simpler, low-dimensional reference system, whose behavior is more tractable. The generator discrepancy between these two systems determines the approximation error of the target metric, which is a test function of queue lengths. Following \cite{LiuYin_20}, we choose the test function to be the clipped $r$-th moment of average queue length at steady-state. Recall that the most of the fastest servers are busy whenever there are waiting jobs in the system  in Lemmas \ref{lem:SSC-sup} and \ref{lem:SSC-super}. This motivates us to consider the fluid model of a simplified single-queue system where the service rate is slightly above the arrival rate. Specifically, the fluid system evolves according to the ODE
\[
\dot{x} = -\frac{1}{N^{\alpha}},
\]
with arrival rate $\lambda$ and effective departure rate $\lambda + \frac{1}{N^\alpha}$. Intuitively, we can associate the term $x$ with the average queue length, and the ODE could be a good approximation for good load balancing algorithms (e.g., JFSQ). 

Our metric of interesting  is the moment bound of total queue length, and we consider the $r$-th moment of the test function $$h(x) = \max\{x - \eta, 0\},$$ for some threshold $\eta > 0.$ The threshold is carefully chosen under different heavy traffic regimes and is specified later. 
Let $g(x)$ denote the solution to the following Stein's equation:
\begin{align}
\frac{dg(x)}{dt} = g'(x) \cdot \left(-\frac{1}{N^{\alpha}}\right) = h^r(x), ~ \forall x, \label{Gen:L}
\end{align}
where $g'(x) = \frac{dg(x)}{dx}.$ 
Note that while the simplified fluid system is one-dimensional, the actual load balancing system is $N$-dimensional. We define the average queue length at steady-state $\bar{Q} := \frac{1}{N} \sum_{n=1}^{N} Q_n$ and its realization  
$\bar{q} := \frac{1}{N} \sum_{n=1}^{N} q_n.$

Let $G$ denote the generator of the original queueing process under load balancing algorithms. $g(\bar Q)$ is bounded according to the \eqref{Gen:L} and the finite buffer assumptioni.e., $\bar Q \leq b$, 
we have the Stein identity:
\begin{equation}
\mathbb{E}[G g(\bar{Q})] = 0. \label{eq:sse}
\end{equation}
Combining the generator identity \eqref{Gen:L} with \eqref{eq:sse}, we derive the coupling equation:
\begin{align}
\mathbb{E}\left[h^r(\bar{Q})\right] 
= \mathbb{E}\left[ g'(\bar{Q}) \cdot \left(-\frac{1}{N^\alpha}\right) - G g(\bar{Q}) \right]. \label{eq:gencou-mainpage}
\end{align}
Under JFSQ, we expand $G g(\bar{Q})$ and have:
\begin{align}
\mathbb{E}\left[h^r(\bar{Q})\right]
=&\mathbb{E}\left[ g'(\bar{Q}) \cdot \left(-\frac{1}{N^\alpha} \right) \right. \nonumber\\
&\left. - \lambda N(1 - A_b(Q)) \left(g\left(\bar{Q} + \frac{1}{N} \right) - g(\bar{Q})\right) \right. \nonumber\\
&\left. - \left(\sum_{n=1}^{N} \mu_n \mathbb{I}(Q_n \geq 1)\right) \left(g\left(\bar{Q} - \frac{1}{N}\right) - g(\bar{Q})\right) \right], \label{gen-diff}
\end{align}
where $A_b(Q)$ denotes the probability that an arriving job joins a queue of length $b$, i.e., blocked. The term of $\lambda N(1-A_b(Q))$ is the rate of the average queue length to increase and the term of $\sum_{n=1}^{N} \mu_n \mathbb{I}(Q_n \geq 1)$ is the rate of a job leaving the system. 
Following the steps in \cite{LiuYin_20} and using Taylor expansion, we derive the iterative relation on $r$-th moment of the average queue length:
\begin{align}
\mathbb{E}\left[h^r\left( \bar Q \right) \right] 
\leq & \mathbb{E} \left[ N^{\alpha} h^r\left(\bar Q \right) \left(\lambda + \frac{1}{N^\alpha} - \frac{1}{N} \sum_{n=1}^{N} \mu_n \mathbb{I}(Q_n \geq 1) \right) \mathbb{I}({\bar{Q} > \eta + \frac{1}{N}}) \right] \label{eq: key}\\
& + \frac{2^{r+2}}{N^{r - \alpha}} 
+ \frac{r}{N^{1 - \alpha}} \mathbb{E} \left[ h^{r-1} \left( \bar{Q} + \frac{1}{N} \right) \right]. \label{eq: key 1}
\end{align}
The full derivation is included in Appendix \ref{app:stein} for completeness. 
If we ignore the term in \eqref{eq: key}, we already have the iteration between the $r$th-order moment and the $(r-1)$th-order moment of queue length. Therefore, the key is to quantify the term in \eqref{eq: key} and $(\lambda + \frac{1}{N^\alpha} - \frac{1}{N} \sum_{n=1}^{N} \mu_n \mathbb{I}(Q_n \geq 1))$ in particular. 
Intuitively, when the average queue length is greater than a threshold ${\bar{Q} > \eta + \frac{1}{N}}$ and almost all servers are busy under JFSQ, the average service rate $\frac{1}{N} \sum_{n=1}^{N} \mu_n \mathbb{I}(Q_n \geq 1)$ should be ``close'' to $\lambda+\frac{1}{N^\alpha} = 1.$
Specifically, if $\frac{1}{N} \sum_{n=1}^{N} \mu_n \mathbb{I}(Q_n \geq 1) \geq 1-\frac{c}{N^\alpha}$ holds for a constant smaller than $c<1$, {we have the key term in \eqref{eq: key} 
$$\mathbb{E} \left[ N^{\alpha} h^r\left(\bar Q \right) \left(\lambda + \frac{1}{N^\alpha} - \frac{1}{N} \sum_{n=1}^{N} \mu_n \mathbb{I}(Q_n \geq 1) \right) \mathbb{I}({\bar{Q} > \eta + \frac{1}{N}}) \right]\leq c \cdot \mathbb{E} \left[ h^r\left(\bar Q \right)\right].$$
This term can be absorbed into $\mathbb{E}\left[h^r\left(\bar Q \right) \right]$ by moving it to the left-hand-side, resulting in  
\begin{align*}
(1-c)\mathbb{E}\left[h^r\left( \bar Q \right) \right] 
\leq \frac{2^{r+2}}{N^{r - \alpha}} 
+ \frac{r}{N^{1 - \alpha}} \mathbb{E} \left[ h^{r-1} \left( \bar{Q} + \frac{1}{N} \right) \right]. 
\end{align*}
We again have a clean iteration for the $r$th-order moment and the $(r-1)$th-order moment of queue length.}
Next, we justify the intuition for sub and super Haffin-Whitt regimes, where we use distinct thresholds $\eta$ for both regimes. 
We first prove the high-order moment bounds in Theorem \ref{thm:main} and then asymptotic zero-waiting results in Theorem \ref{thm:zerodelay}.

\subsection{Proving Theorem \ref{thm:main}}
As discussed above, the key is to establish the lower bound of average service rate, $\frac{1}{N} \sum_{n=1}^{N} \mu_n \mathbb{I}(Q_n \geq 1),$ in \eqref{eq: key}. We first prove the results of Sub-Haffin-Whitt regime in Theorem \ref{thm:main}.   
\subsubsection*{Sub-Haffin-Whitt Regime}
Based on Lemma \ref{lem:SSC-sup}, we define the key event of state-space collapse 
\begin{align}
\mathcal E_{sub}=&\{q~|~ 
\min\{\sum_{n=1}^{N} (q_n-1)\mathbb{I}(q_n\geq 2), N_2+(1-\frac{1}{4\nu})N^{1-\alpha} -\sum_{n=1}^{N} \mathbb{I}(q_n\geq 1 ) \}\leq (1-\frac{3}{8\nu})  N^{{1-\alpha}}\}.
\end{align}
In Sub-Haffin-Whitt regime, we let $\eta = \frac{N_2}{N} + (1-\frac{1}{4\nu})\frac{1}{N^{\alpha}}$ and further define the event $\mathcal E_1$ as follows 
\begin{align}
\mathcal E_{1}=&\{q~|~ \bar q = \frac{1}{N}\sum_{n=1}^{N} q_n> \frac{N_2}{N} + (1-\frac{1}{4\nu})\frac{1}{N^{\alpha}} + \frac{1}{N}\}. 
\end{align}
For the system states $Q \in \mathcal E_1 \cap \mathcal E_{sub},$ we have
\begin{eqnarray*}
\sum_{n=1}^{N} Q_n > 
N_2+ (1-\frac{1}{4\nu})N^{{1-\alpha}} +1, 
\end{eqnarray*} which implies that 
\begin{align*}
\sum_{n=1}^{N} (Q_n-1)\mathbb{I}({Q_n\geq 2}) =& \sum_{n=1}^{N} Q_n - \sum_{n=1}^{N} \mathbb{I}({Q_n\geq 1}) \nonumber \\
>& N_2+(1-\frac{1}{4\nu})N^{1-\alpha}  - \sum_{n=1}^{N} \mathbb{I}({Q_n\geq 1}).
\end{align*} 
Therefore, when the system states $Q \in \mathcal E_1 \cap \mathcal E_{sub}$, we have
\begin{align*}
&\min\{\sum_{n=1}^{N} (Q_n-1)\mathbb{I}(Q_n\geq 2), N_2+(1-\frac{1}{4\nu})N^{1-\alpha} - \sum_{n=1}^{N} \mathbb{I}(Q_n\geq 1 ) \}\\
= &  N_2+(1-\frac{1}{4\nu})N^{1-\alpha}  - \sum_{n=1}^{N} \mathbb{I}(Q_n\geq 1 )\\
\leq &  (1-\frac{3}{8\nu}) N^{1-\alpha}.
\end{align*} This further implies that 
\begin{align}
    \sum_{n=N_2+1}^{N} \mathbb{I}(Q_n\geq 1) = &\sum_{n=1}^{N} \mathbb{I}(Q_n\geq 1)-\sum_{n=1}^{N_2}  \mathbb{I}(Q_n\geq 1) \nonumber\\
    \geq & N_2+ \frac{1}{8\nu} N^{{1-\alpha}} -N_2
     = \frac{1}{8\nu} N^{{1-\alpha}}. \label{eq: busy sub}
\end{align}
Now we can establish the lower bound of the total service rate as follows
\begin{align}
&\sum_{n=1}^{N} \mu_n \mathbb{I}(Q_n\geq 1) \nonumber\\
=&\sum_{n=1}^{N_2} \mu_n \mathbb{I}(Q_n\geq 1)+\sum_{n=N_2+1}^{N} \mu_n \mathbb{I}(Q_n\geq 1) \nonumber\\ 
= &\sum_{n=1}^{N_2} \mu_n - \sum_{n=1}^{N_1} \mu_n\mathbb{I}(Q_n=0) -\sum_{n=N_1+1}^{N_2} \mu_n\mathbb{I}(Q_n=0)+\sum_{n=N_2+1}^{N} \mu_n \mathbb{I}(Q_n\geq 1) \nonumber\\
\geq &  \lambda N -\sum_{n=1}^{N_1} \mu_n\mathbb{I}(Q_n=0) -\sum_{n=N_1+1}^{N_2} \mu_n\mathbb{I}(Q_n=0) +\frac{\epsilon}{8\nu} N^{{1-\alpha}} \nonumber\\
\geq &  \lambda N -\mu_{N_2} -   \mu_1 \left(2\log^2 N +1\right)- 3\mu_{N_1}\sqrt{N}\log N+\frac{\epsilon}{8\nu} N^{{1-\alpha}} \nonumber\\
\geq &  \lambda N +\frac{\epsilon}{16\nu}N^{{1-\alpha}} 
\end{align} where the first inequality holds due to the definition of $\sum_{n=1}^{N_2}\mu_n = \lambda N$ and the inequality \eqref{eq: busy sub}; the second inequality holds according to Lemmas \ref{lem:most n1} and \ref{lem:most n2}; 
the last inequality holds because {Assumption \ref{assumption: sub} implies $N^{1-\alpha} \geq \frac{16\nu}{\epsilon}(\mu_{N_2} + \mu_1 \left(2\log^2 N +1\right) + 3\mu_{N_1}\sqrt{N}\log N)$.} Therefore, when the event $\tilde{\mathcal E}_{sub}$ happens, we have 
\begin{align}
N^{\alpha} \left({\lambda} + \frac{1}{N^{\alpha}}-\frac{1}{N}\left(\sum_{n=1}^{N} \mu_n \mathbb{I}(Q_n\geq 1)\right)\right) 
\leq   N^{\alpha}  \left( \lambda + \frac{1}{N^\alpha} - \lambda - \frac{\epsilon}{16N^{\alpha} } \right) 
=1-\frac{\epsilon}{16\nu}. \label{eq:largeservice sup}
\end{align}
Now we can plug \eqref{eq:largeservice sup} into the \eqref{eq: key} and we have 
\begin{align}
&\frac{\epsilon}{16\nu}\mathbb{E}\left[h^r\left(\frac{1}{N}\sum_{n=1}^{N} Q_n\right)\right]\nonumber\\
\leq 
&2b^r N^{\alpha}\mathbb P\left(\mathcal E_{sub}^c\right)+\frac{2^{r+2}}{ N^{r-\alpha}}+\frac{r}{N^{1-\alpha}} \mathbb{E}\left[{h^{r-1}}\left(\frac{1}{N}\sum_{n=1}^{N} Q_n +{\frac{1}{N}}\right)\right]\nonumber\\
\leq& \frac{2^{r+2}+2}{ N^{r-\alpha}}+\frac{r}{N^{1-\alpha}} \mathbb{E}\left[{h^{r-1}}\left(\frac{1}{N}\sum_{n=1}^{N} Q_n +{\frac{1}{N}}\right)\right],\nonumber
\end{align}
{where the last inequality holds by $\mathbb P(\mathcal E_{sub}^c) \leq \frac{1}{N^{2r}}$ in Lemma \ref{lem:SSC-sup} and $\frac{b^r}{N^{r-\alpha}} \leq 2$ in Assumption \ref{assumption: sub}.}
Recall $k_{sub} = \frac{16\nu}{\epsilon}=\frac{16(b - 1)\mu_{N_2}}{\epsilon^2}$, we have:

\begin{align}
\mathbb{E} \left[ h^r(\bar{Q}) \right]
\leq & \frac{k_{sub} (2^{r+2}+2)}{ N^{r - \alpha} }
+ \frac{k_{sub} r}{ N^{1-\alpha} } \mathbb{E} \left[ h^{r-1} \left( \bar{Q} + \frac{1}{N} \right) \right]. \label{eq:iter sub}
\end{align}
Let us define the following parameters 
\begin{align*}
w_j := \frac{k_{sub} j}{N^{1-\alpha}}, \quad 
z_j := \frac{k_{sub} (2^{j+2}+2)}{N^{j-\alpha}}.
\end{align*}
Starting with \eqref{eq:iter sub}, we iteratively apply the same inequality to the moment of lower order and then have
\begin{align}
\mathbb{E}[h^r(\bar{Q})] 
&\leq z_r + w_r \mathbb{E}[h^{r-1}(\bar{Q} + \tfrac{1}{N})] \nonumber\\
&\leq z_r + w_r z_{r-1} + w_r w_{r-1} \mathbb{E}[h^{r-2}(\bar{Q} + \tfrac{2}{N})] \nonumber\\
&\leq z_r + w_r z_{r-1} + w_r w_{r-1} z_{r-2} + \cdots +  \prod_{j=1}^{r} w_j. \label{eq:rec_expand}
\end{align}
Therefore, we can deduce the following inequality 
\begin{align}
\mathbb{E}[h^r(\bar{Q})] 
\leq \sum_{k=0}^{r-1} \left( \prod_{j=r-k+1}^{r} w_j \right) z_{r-k} + \prod_{j=1}^{r} w_j \label{eq:moment_bound_sum}
\end{align}
Now we bound the two terms above as follows: 
\begin{itemize}[leftmargin=*]
    \item Since $w_j \leq \frac{k_{sub} r}{N^{1-\alpha}}$ for all $j \leq r$, we have
    \[
    \prod_{j=1}^{r} w_j \leq \left( \frac{k_{sub} r}{N^{1-\alpha}} \right)^r. 
    \]
    \item 
    For each term in the summation, we have:
\[
\left( \prod_{j=r-k+1}^{r} w_j \right) z_{r-k} 
\leq \left( \frac{k_{sub} r}{N^{1-\alpha}} \right)^k \cdot \frac{k_{sub} (2^{r-k+2}+2)}{N^{r-k - \alpha}} 
= \frac{k_{sub}^{k+1} r^k(2^{r-k+2}+2)}{N^{r - \alpha + k(1 - \alpha)}} \leq \frac{8k_{sub}^{k+1} r^k 2^{r-k}}{N^{r - \alpha + k(1 - \alpha)}}.
\]
For a large $N$ such that $\frac{k_{sub} r}{4N^{1-\alpha}} < 1$, we have:
\[
\sum_{k=0}^{r-1} \frac{k_{sub}^{k+1} r^k \cdot 2^{r-k+3}}{N^{r-\alpha + k(1-\alpha)}} =  \frac{2^{r+3} k_{sub}}{N^{r-\alpha}} \cdot \sum_{k=0}^{r-1} \left(\frac{k_{sub} r}{N^{1-\alpha}}\right)^{k}
\leq \frac{2^{r+3} k_{sub}}{N^{r-\alpha}} \frac{1}{1 - \frac{k_{sub} r}{2 N^{1-\alpha}}} \leq \frac{ 2^{r+4} k_{sub}}{N^{r-\alpha}}.
\]
\end{itemize}
Finally, we have 
\begin{align}
\mathbb{E}[h^r(\bar{Q})] \leq \left( \frac{k_{sub} r}{N^{1-\alpha}} \right)^r +  \frac{ 2^{r+4} k_{sub}}{N^{r-\alpha}} \leq 17\left( \frac{ 2k_{sub} r}{N^{1-\alpha}} \right)^r.\label{eq:finalres sup} 
\end{align}

\subsection*{Super-Haffin-Whitt Regime}
Based on Lemma \ref{lem:SSC-super}, we define the key event of state-space collapse 
\begin{align}
\mathcal E_{super}=&\{q~|~ 
\min\{\sum_{n=1}^{N} (q_n-1)\mathbb{I}(q_n\geq 2)- k_{super} N^\alpha \log N, N -\sum_{n=1}^{N} \mathbb{I}(q_n\geq 1 ) \}\leq \frac{1}{2\mu_1}  N^{1-\alpha}\}
\end{align}
Note in the super-Haffin-Whitt regime, we let $\eta = 1 + \frac{k_{super}\log N}{N^{\alpha}}$ and define the event $\mathcal E_2$ holds as follows 
\begin{align}
\mathcal E_{2}=&\{q~|~ \bar q = \frac{1}{N}\sum_{n=1}^{N} q_n> \eta + \frac{1}{N}\}
\end{align}
Now define event $\tilde{\cal E}_{super} := \mathcal E_2 \cap \mathcal E_{super}.$ 
For the system states in $\mathcal E_2,$ we have
\begin{align*}
\sum_{n=1}^{N} Q_n =& \sum_{n=1}^{N} (Q_n-1)\mathbb I(Q_n \geq 2) + \sum_{n=1}^{N} \mathbb I(Q_n \geq 1)\\
>& N(\eta + \frac{1}{N}) = N + k_{super} N^{1-\alpha}\log N+1. 
\end{align*} Therefore, when the event $\tilde{\mathcal E}_{super}$ happens, we have  
\begin{align*}
&\min\left\{\sum_{n=1}^{N} (Q_n-1)\mathbb{I}(Q_n\geq 2) - k_{super} N^\alpha \log N, ~ N- \sum_{n=1}^{N} \mathbb{I}(Q_n\geq 1) \right\} \\
=& N - \sum_{n=1}^{N} \mathbb{I}(Q_n\geq 1) = \sum_{n=1}^{N} \mathbb{I}(Q_n = 0) \leq \frac{1}{2\mu_1} N^{1-\alpha}, 
\end{align*}
which implies that 
\begin{align*}
\sum_{n=1}^{N} \mu_n \mathbb{I}(Q_n\geq 1) 
=& \sum_{n=1}^{N} \mu_n - \sum_{n=1}^{N} \mu_n \mathbb{I}(Q_n = 0) \\
\geq& N - \frac{1}{2\mu_1} \mu_1 N^{1-\alpha} 
= N - \frac{1}{2} N^{1-\alpha},
\end{align*}
where the last inequality holds because of the definition of $\sum_{n=1}^{N} \mu_n = N$ and the assumption of $\mu_1 \geq \cdots \geq \mu_n$.  
{Therefore, recalling that $\lambda + \frac{1}{N^{\alpha}} = 1$, on the event $\tilde{\mathcal E}_{\mathrm{super}}$ we have
\begin{align}
\lambda+\frac{1}{N^{\alpha}}-\frac{1}{N}\left(\sum_{n=1}^{N} \mu_n \mathbb{I}(Q_n\geq 1)\right)\leq \lambda+\frac{1}{N^{\alpha}}-\frac{1}{N}(N - \frac{1}{2} N^{1-\alpha}) = \frac{1}{2N^\alpha}.   \label{eq:largeservice super} 
\end{align}}
Now we can plug \eqref{eq:largeservice super} into the \eqref{eq: key} and have  
\begin{align}
&\mathbb{E}\left[h^r\left(\frac{1}{N}\sum_{n=1}^{N} Q_n\right)\right] \nonumber\\
\leq& 2N^\alpha b^r \mathbb P(\tilde{\cal E}^c) + \frac{2^{r+3}}{ N^{r-\alpha}} + \frac{2r}{N^{1-\alpha}} \mathbb{E}\left[{h^{r-1}}\left(\frac{1}{N}\sum_{n=1}^{N} Q_n +{\frac{1}{N}}\right)\right] \nonumber\\
\leq&\frac{2 + 2^{r+3}}{ N^{r-\alpha}} + \frac{2r}{N^{1-\alpha}} \mathbb{E}\left[{h^{r-1}}\left(\frac{1}{N}\sum_{n=1}^{N} Q_n +{\frac{1}{N}}\right)\right] \label{eq：iter super}
\end{align}
{where the last inequality holds by $\mathbb P(\tilde{\cal  E}^c) \leq \frac{1}{N^{2r}}$ in Lemma \ref{lem:SSC-super}
and $\frac{b^r}{N^{r-\alpha}} \leq 2$ in Assumption \ref{assumption: super}}. Now we have established the iterative relationship of high-order moments. The remaining part follows the same proof by expanding \eqref{eq：iter super}, as in Sub-Haffin-Whitt regime in \eqref{eq:largeservice sup}--\eqref{eq:finalres sup}.  

In summary, we have proved the high-order moment bounds of JFSQ under heavy traffic regime in Theorem \ref{thm:main}. We then move to justify the asymptotic zero waiting in Theorem \ref{thm:zerodelay}. 

\subsection{Proving Theorem \ref{thm:zerodelay}}

\subsubsection*{Sub-Haffin-Whitt regime}
In this section, we establish the key metrics under JSFQ, including the blocking probability $p_{\mathcal B},$ the waiting time $\mathbb E[W]$ and the waiting probability $p_{\mathcal W}.$ 

\vspace{3 pt}
{\bf Blocking probability:} We first study the blocking probability $p_{\mathcal B}$ as follows
\begin{align*}
p_{\mathcal B}
\leq& \mathbb P\left(\sum_{n=1}^N Q_n = Nb\right)  
\leq P\left(h^r(\bar Q) \geq (b-\eta)^r\right) \nonumber\\
\leq& \frac{\mathbb E[h^r(\bar Q)]}{(b-\eta)^r} 
\leq 17\left(\frac{2k_{sub} r}{(b-\eta)N^{1 - \alpha}}\right)^r \leq 17\left(\frac{2k_{sub} r}{N^{1 - \alpha}}\right)^r.
\end{align*}
where the first inequality holds by the definition of blocking under JFSQ; the second inequality holds by the definition of $h$ function; the third inequality holds by the Markov inequality; the fourth inequality holds by Theorem \ref{thm:main};
the last inequality holds because $b-\eta \geq 1.$ 

\vspace{3 pt}

{\bf Waiting time:} 
For jobs that are not blocked, the average response time in the system includes waiting time in the buffer $\mathbb E[W]$ and service time $\mathbb E[S]$. 
Let $Q_{\mathcal W} = \sum_{n=1}^N(Q_n-1)\mathbb I(Q_n \geq 2)$ as the total number of waiting jobs in the buffer at steady-state (i.e., the queue length except jobs in service). 
According to Little's law, we have 
\begin{align}
    \mathbb E[W] = \frac{\mathbb E[Q_{\mathcal W}]}{\lambda N(1-p_{\mathcal B})}.  
\end{align}
Let $r=1$ in Theorem \ref{thm:main}, we have the bound on the total queue length  \begin{align*}
\mathbb E\left[\sum_{n=1}^{N} Q_n\right]  \leq N_2 + N^{1-\alpha} +(34k_{sub} + \log N)N^\alpha.
\end{align*}
This implies
\begin{align*}
\mathbb E\left[Q_{\mathcal W}\right] =& \mathbb E\left[\sum_{n=1}^{N} Q_n\right]  - \mathbb E\left[\sum_{n=1}^N\mathbb I(Q_n \geq 1)\right]  \\
\leq& N_2 + N^{1-\alpha} + (34k_{sub} + \log N)N^\alpha - \mathbb E\left[\sum_{n=1}^{N_2}\mathbb I(Q_n \geq 1)\right]\\
= & N^{1-\alpha} + (34k_{sub} + \log N)N^\alpha + \mathbb E\left[\sum_{n=1}^{N_2}\mathbb I(Q_n = 0)\right]\\
\leq& N^{1-\alpha} + (34k_{sub} + \log N)N^\alpha + 6\sqrt{N}\log N
\end{align*}
where the last inequality holds because of Lemma \ref{lem:most n2}, where $\sum_{n=1}^{N_2}\mathbb{I}(Q_n\geq 1) \geq N_2 - 5\sqrt{N} \log N$ holds with a high probability at least $1-\frac{1}{N}$ when $\log N \geq 1/\epsilon.$
Therefore, the expected waiting time is bounded by
\begin{align*}
\mathbb E[W]
\leq& \frac{N^{1-\alpha} + (34 k_{\rm sub} + \log N) N^\alpha + 6 \sqrt{N}\log N}{\lambda N (1-p_{\mathcal B})}. 
\end{align*}
The proof is completed by choosing a large $N^{1-\alpha} \geq 2r k_{sub} \log (17N)$ such that $p_{\mathcal B} \leq 1/N.$

\vspace{3 pt}
{\bf Waiting probability:} 
Recall $\overline{\mathcal W}$ to be the waiting event that a job is not blocked (i.e., a job is routed to a busy server and waits in a buffer) and  $p_{\overline{\mathcal W}}$ to be its steady-state probability. 
Let $T_{Q}$ be the queueing time for the jobs waiting in the buffer {(i.e., the waiting time of jobs not receiving service immediately) and $S$ be the service time of jobs in service}. {According to Little's law (Little’s Law for Time in Queue of Corollary 6.4 in \cite{harchol-balter_2013})}, we have $$\lambda N p_{\overline{\mathcal W}} \mathbb E[T_{Q}] = \mathbb E\left[Q_{\mathcal W}\right] ~~\text{and}~~ \lambda N(1-p_{\mathcal B})\mathbb E[S] = \mathbb E[\sum_{n=1}^N \mathbb I(Q_n \geq 1)],$$ 
Note $\mathbb E[T_{Q}]$ is lower bounded by the service time of $\mathbb E[S]$ according to its definition, i.e., $\mathbb E[T_{Q}] \geq \mathbb E[S]$. 
It implies $$p_{\overline{\mathcal W}} = \frac{\mathbb E\left[Q_{\mathcal W}\right]}{\lambda N \mathbb E[T_{Q}]} \leq \frac{\mathbb E\left[Q_{\mathcal W}\right]}{\lambda N \mathbb E[S]} = (1-p_{\mathcal B}) \frac{\mathbb E\left[Q_{\mathcal W}\right]}{\mathbb E[\sum_{n=1}^N \mathbb I(Q_n \geq 1)]}.$$  
Therefore, we establish
\begin{align*}
    p_{\overline{\mathcal W}} \leq& (1-p_{\mathcal B})\frac{N^{1-\alpha} +(34k_{sub} + \log N)N^\alpha + 6\sqrt{N}\log N}{(1-\frac{1}{N})(N_2 - 5\sqrt{N}\log N)} \\ 
     \leq& \frac{N^{1-\alpha} +(34k_{sub} + \log N)N^\alpha + 6\sqrt{N}\log N}{N_2 - 5\sqrt{N}\log N} \\
    \leq& \frac{N^{1-\alpha} +(34k_{sub} + \log N)N^\alpha + 6\sqrt{N}\log N}{N -\frac{N^{1-\alpha}}{\epsilon} - 5\sqrt{N}\log N}\\
    \leq& \frac{4N^{1-\alpha} +4(34k_{sub} + \log N)N^\alpha + 24\sqrt{N}\log N}{N}
\end{align*}
where the second inequality holds because $p_{\mathcal B} \leq 1/N$; 
{the third inequality follows from the definition of $N_2$, namely $\sum_{n=1}^{N_2}\mu_n = \lambda N$, which implies $\sum_{n=N_2+1}^{N}\mu_n = N^{1-\alpha}$. Since $\mu_n \ge \epsilon$ for any server $n$, we have $(N - N_2)\epsilon \le N^{1-\alpha}$, and hence $N_2 \ge N - \frac{N^{1-\alpha}}{\epsilon}$;}
the last inequality holds for a large $N$ such that $\epsilon \geq 2/N^\alpha$ and $\sqrt{N} \geq 20\log N.$ Finally, a job that is not routed to an idle server is either blocked or wait in a buffer
\begin{align*}
p_{\mathcal W} =&  p_{\mathcal B}+p_{\overline{\mathcal W}} \leq 
\frac{4N^{1-\alpha} + 4(34k_{sub} + \log N)N^\alpha + 24\sqrt{N}\log N + 1}{N}
\end{align*}
This proves the waiting probability of sub-Haffin-Whitt regime in Theorem \ref{thm:zerodelay}.

\subsubsection*{Super-Haffin-Whitt regime}
The blocking probability and waiting time are established by following the same steps above. In particular, we bound the blocking probability 
\begin{align*}
p_{\mathcal B}
\leq& \mathbb P\left(\sum_{n=1}^N Q_n = Nb\right)  
\leq 32\left(\frac{2 r}{N^{(1 - \alpha)}}\right)^r.
\end{align*}

\vspace{3 pt}
{\bf Waiting time:} 
Deriving the waiting time is different and more involved compared to Sup-Haffin-Whitt regime. The underlying reason is that Lemma \ref{lem:most n2} is not refined enough to quantify the number of idle servers when bounding $\mathbb E[Q_{\mathcal W}]$. Instead, we leverage the state-space result in Lemma \ref{lem:SSC-super} to establish a proper upper bound. 
Recall the definition $Q_{\mathcal W} = \sum_{n=1}^N(Q_n-1)\mathbb I(Q_n \geq 2)$ and the event 
\begin{align}
\mathcal E_{super}=&\left\{  
\min\{Q_{\mathcal W}- k_{super} N^\alpha \log N, N - \sum_{n=1}^N\mathbb I(Q_n \geq 1) \}\leq \frac{N^{1-\alpha}}{2\mu_1} \right\}. \nonumber
\end{align}
Define
\begin{align}
\mathcal E_{small}=&\{Q_{\mathcal W}- k_{super} N^\alpha \log N \leq N - \sum_{n=1}^N\mathbb I(Q_n \geq 1)\}. \nonumber  
\end{align}
Let $r=1$ in Theorem \ref{thm:main}, we have the total queue length bounded by
\begin{align}
\mathbb{E}[\max\{Q_{\mathcal W}+\sum_{n=1}^N\mathbb I(Q_n \geq 1)-N - k_{super}N^\alpha\log N,0\}] \leq 64N^\alpha, \nonumber
\end{align}
which directly implies  
\begin{align}
\mathbb{E}[\max\{Q_{\mathcal W}+\sum_{n=1}^N\mathbb I(Q_n \geq 1)-N - k_{super}N^\alpha\log N,0\} \mathbb I(\mathcal E_{super} \cap \tilde{\mathcal E}_{small})] \leq 64N^\alpha. \nonumber
\end{align}
Therefore, we have 
\begin{align}
\mathbb{E}[Q_{\mathcal W} \mathbb I(\mathcal E_{super} \cap \tilde{\mathcal E}_{small})] \leq& \mathbb{E}[(N-\sum_{n=1}^N\mathbb I(Q_n \geq 1))\mathbb I(\mathcal E_{super} \cap \tilde{\mathcal E}_{small})] + k_{super} N^\alpha\log N + 64N^\alpha \nonumber\\
\leq& \frac{N^{1-\alpha}}{2\mu_1}   + k_{super} N^\alpha\log N + 64N^\alpha. \nonumber
\end{align}
where the second inequality holds because of the definition of $\mathcal E_{super}$ and $\mathcal E_{small}$. 
Now we decouple  
\begin{align*}
\mathbb E\left[Q_{\mathcal W}\right] =&~ 
\mathbb E\left[Q_{\mathcal W}\mathbb I(\mathcal E_{super} \cap \mathcal E_{small})\right] + \mathbb E\left[Q_{\mathcal W}\mathbb I(\mathcal E_{super} \cap \tilde{\mathcal E}_{small})\right] + \mathbb E\left[Q_{\mathcal W}\mathbb I(\tilde{\mathcal E}_{super})\right].
\end{align*}
For the first term, according to the definition of $\mathcal E_{super}$ and $\mathcal E_{small},$ we have 
$$\mathbb E\left[Q_{\mathcal W}\mathbb I(\mathcal E_{super} \cap \mathcal E_{small})\right] \leq \frac{N^{1-\alpha}}{2\mu_1}  + k_{super} N^\alpha\log N.$$
For the last term, we use Lemma \ref{lem:SSC-super} and have 
\begin{align*}
\mathbb E\left[Q_{\mathcal W}\mathbb I(\tilde{\mathcal E}_{super})\right] 
\leq&~ Nb \mathbb P\left(\tilde{\mathcal E}_{super}\right) \leq \frac{Nb}{N^{2}} \leq 1. 
\end{align*}
By combining all three terms above, we have
\begin{align}
\mathbb E\left[Q_{\mathcal W}\right] \leq &~ \frac{N^{1-\alpha}}{\mu_1}   + 2 k_{super} N^\alpha\log N + 64N^\alpha + 1. \label{eq:waiting jobs super} 
\end{align}
Therefore, the expected waiting time is bounded by
\begin{align*}
\mathbb E[W] \leq \frac{\frac{N^{1-\alpha}}{\mu_1}   + 2 k_{super} N^\alpha\log N + 64N^\alpha + 1}{\lambda N(1-p_{\mathcal B})}. 
\end{align*}

\vspace{3 pt}
{\bf Waiting probability:}
Recall the upper bound of waiting probability as follows
$$p_{\overline{\mathcal W}} \leq (1-p_{\mathcal B}) \frac{\mathbb E\left[Q_{\mathcal W}\right]}{\mathbb E[\sum_{n=1}^N \mathbb I(Q_n \geq 1)]}.$$ 
Similarly, by \eqref{eq:waiting jobs super} and Lemma \ref{lem:most n2}, we have 
\begin{align*}
p_{\overline{\mathcal W}} \leq& (1-p_{\mathcal B})\frac{\frac{N^{1-\alpha}}{\mu_1} + 2k_{super}N^\alpha\log N + 64N^\alpha + 1}{(1-\frac{1}{N})(N_2 - 5\sqrt{N}\log N)} \\
\leq& \frac{\frac{N^{1-\alpha}}{\mu_1} + 2k_{super}N^\alpha\log N + 64N^\alpha + 1}{(N_2 - 5\sqrt{N}\log N)} \\
\leq& \frac{\frac{4N^{1-\alpha}}{\mu_1} + 8k_{super}N^\alpha\log N + 256N^\alpha + 4}{N} 
\end{align*}
where the second inequality holds because $p_{\mathcal B} \leq 1/N$; the third inequality holds because the lower bound of $N_2$ is obtained when the service rate $\mu_n = \epsilon, \forall n > N_2$, i.e., $(N-N_2)\epsilon = N^{1-\alpha}$; the last inequality holds for a large $N$ such that $\epsilon \geq 2/N^\alpha$ and $\sqrt{N} \geq 20\log N$.

Finally, a job that is not routed to an idle server is either blocked or in a buffer, so 
\begin{align*}
p_{\mathcal W} =&  p_{\mathcal B}+p_{\overline{\mathcal W}} \leq 
\frac{\frac{4N^{1-\alpha}}{\mu_1} + 8k_{super}N^\alpha\log N + 256N^\alpha + 5}{N} 
\end{align*}
This proves the results of Super-Haffin-Whitt regime in Theorem \ref{thm:zerodelay}.

\section{Conclusion}
This paper studied JFSQ load balancing in fully heterogeneous many-server systems in the heavy-traffic regimes. We established that JFSQ achieves asymptotic zero-waiting results with the explicit convergence rates with respect to the system size $N$ and dependence on the server heterogeneity captured by the ratio of service rates between the fastest server and the slowest server. Our analysis combined Lyapunov drift techniques and Stein’s method to show that a high-dimensional heterogeneous system under JFSQ can be coupled by a single-server system. The key in our analysis is to design novel Lyapunov functions to derive iterative state-space peeling into the typical region where the most servers are busy under JFSQ. 

\newpage
\appendix
\section{Tail Bound via Lyapunov Drift Analysis}
We introduce a probability tail bound via Lyapunov drift analysis in Lemma \ref{TailBound} from \cite{WanMagSri_21}, which is an extension of the tail bound in \cite{BerGamTsi_01} and is the key to establishing iterative state space peeling. This lemma allows us to apply the Lyapunov drift analysis to iteratively reduce the state space. 

\begin{lemma}\label{TailBound}
Let $(S(t): t \geq 0)$ be a continuous-time Markov chain over a finite state space $\mathcal S$ and is irreducible, so it has a unique stationary distribution $\pi,$ {i.e., $S(\infty) \sim \pi.$}  Consider a Lyapunov function $V: \mathcal S \to R^{+}$ and define the drift of $V$ at a state $s \in \mathcal S$ as $$\nabla V(s) = \sum_{s' \in \mathcal S: s' \neq s} q_{s,s'} (V(s') - V(s)),$$ where $q_{s,s'}$ is the transition rate from $s$ to $s'.$ Assume
\begin{align*} 
\nu_{\max} :=& \max\limits_{s,s'\in \mathcal S: q_{s,s'} >0} |V(s') - V(s)|< \infty ~~ \text{ and } ~~ \bar q := \max\limits_{s \in \mathcal S} (-q_{s,s}) < \infty 
\end{align*}
and define $$q_{\max}\geq \max\limits_{s \in \mathcal S} \sum_{s' \in \mathcal S: V(s) < V(s')} q_{s,s'}.$$
Assume there exists a set $\mathcal E$ with $B>0$, $\gamma>0$, $\delta \geq 0$ such that the following conditions hold
\begin{itemize}
\item $\nabla V(s) \leq -\gamma$ when $V(s) \geq B$ and $s \in \mathcal E.$
\item $\nabla V(s) \leq \delta$ when $V(s) \geq B$ and $s \notin \mathcal E.$
\end{itemize}
Then $$\mathbb P \left(V(S(\infty))\geq B + 2 \nu_{\max} j \right) \leq \alpha^j + \beta \mathbb P\left(S(\infty) \notin \mathcal E\right), ~ \forall j \in \mathbb{N},$$
holds with $$\alpha = \frac{q_{\max}\nu_{\max}}{q_{\max}\nu_{\max} + \gamma} ~~\text{ and }~~ \beta = \frac{\delta}{\gamma}+1.$$
\end{lemma}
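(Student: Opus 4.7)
The plan is to prove this tail bound via a stationary drift argument, extending the classical Bertsimas--Gamarnik--Tsitsiklis bound to accommodate excursions into the bad set $\mathcal E^c$. The essential idea is that in stationarity the up-crossing rate across any level set of $V$ equals its down-crossing rate, and the two drift conditions pin down both sides of this balance.

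First I would uniformize the CTMC at rate $\bar q$; the resulting discrete-time chain $\{\tilde X_k\}$ has the same stationary distribution $\pi$, satisfies
\[
\mathbb E[V(\tilde X_{k+1}) - V(\tilde X_k)\mid \tilde X_k = s] = \nabla V(s)/\bar q,
\]
and has per-step upward transition probability bounded by $q_{\max}/\bar q$ with jump magnitude bounded by $\nu_{\max}$. Then I would partition the state space into the discrete levels $L_j := \{s : V(s) \in [B + 2\nu_{\max} j,\, B + 2\nu_{\max}(j+1))\}$ for $j \geq 0$. The factor of $2$ in the level width is crucial: since one-step increments of $V$ are bounded by $\nu_{\max}$, the chain cannot skip a level, so transitions from $L_j$ reach only $L_{j-1}$, $L_j$, or $L_{j+1}$. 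This neighbor structure is what enables an inductive tail bound across levels.

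Next I would write the stationary flux balance between $L_{\leq j}$ and $L_{\geq j+1}$: the upward crossing rate is at most $q_{\max}\,\pi(L_j)$, while the downward crossing rate is at least a quantity driven by the negative drift. In the good region $\mathcal E \cap \{V \geq B\}$, the drift $\nabla V \leq -\gamma$ combined with upward jump rate $q_{\max}$ and jump size $\nu_{\max}$ forces the net downward rate to exceed $\gamma/\nu_{\max}$ times $\pi(\mathcal E \cap L_{\geq j+1})$. Splitting the downward contribution according to $\mathcal E$ versus $\mathcal E^c$ (where the drift is controlled only by $\delta$) yields a one-step recursion of the form
\[
\pi(L_{\geq j+1}) \;\leq\; \alpha\,\pi(L_{\geq j}) \;+\; c\,\pi(\mathcal E^c),
\]
with $\alpha = q_{\max}\nu_{\max}/(q_{\max}\nu_{\max}+\gamma)$ reflecting the good-region contraction and $c$ proportional to $\delta/\gamma$ reflecting bad-region leakage. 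Iterating this recursion from $j$ down to $0$ and summing the geometric series $\sum_{k\geq 0}\alpha^k = 1/(1-\alpha)$ produces the claimed inequality, with $\beta = \delta/\gamma + 1$ absorbing both the iterated leakage term and the trivial base estimate $\pi(L_{\geq 0}) \leq 1$.

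The main obstacle is establishing the flux inequality with clean constants and ensuring the bad-set penalty does not compound geometrically over levels. Because the leakage term is bounded by $\pi(\mathcal E^c)$ (not restricted to a particular level), one must verify that summing over $j$ still produces a $\beta$ independent of $j$; this is precisely what the constant $\delta/\gamma + 1$ encodes. A secondary subtlety is that the drift hypotheses only hold when $V(s) \geq B$, so the base case $j = 0$ must use the trivial bound $\pi(L_{\geq 0}) \leq 1$, which is the origin of the ``$+1$'' in $\beta$.
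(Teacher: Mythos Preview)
The paper does not supply its own proof of this lemma: it is quoted verbatim from \cite{WanMagSri_21} (with lineage back to \cite{BerGamTsi_01}) and used as a black box throughout the analysis. So there is no in-paper argument against which to compare yours.

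Your sketch is the standard route for results of this type and is consistent with the cited lineage: uniformize, slice the range of $V$ into levels of width $2\nu_{\max}$ so the chain is nearest-neighbor on levels, equate stationary up- and down-crossing fluxes across each level boundary, and split the down-crossing side according to $\mathcal E$ versus $\mathcal E^c$ to obtain a one-step recursion on $\pi(V \geq B + 2\nu_{\max} j)$. That is correct in spirit. One point to watch when you fill in details: if you write the recursion as $\pi(L_{\geq j+1}) \leq \alpha\,\pi(L_{\geq j}) + c\,\pi(\mathcal E^c)$ with a level-independent leakage $c$, iterating gives $\alpha^j + \frac{c}{1-\alpha}\,\pi(\mathcal E^c)$, and since $1/(1-\alpha) = 1 + q_{\max}\nu_{\max}/\gamma$, matching $\beta = \delta/\gamma + 1$ forces $c = (\delta+\gamma)/(q_{\max}\nu_{\max}+\gamma)$. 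You should check that the flux inequality actually delivers this $c$ (equivalently, that the bad-set contribution in the level-$j$ balance is at most $(\delta+\gamma)/\nu_{\max}\cdot\pi(\mathcal E^c)$ rather than something scaling with $q_{\max}$); otherwise your $\beta$ will pick up an unwanted dependence on $q_{\max}\nu_{\max}/\gamma$. You have already flagged this as ``the main obstacle,'' which is accurate.
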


{
\begin{remark}\label{remark:lya}
Lemma \ref{TailBound} from \cite{WanMagSri_21} requires for a nonnegative Lyapunov function but can be more broadly applied to a lower-bounded Lyapunov function. Considering $V(s)$ which is lower bounded and letting $m = \inf_{s\in\mathcal S} V(s)$, we can define a shifted function
$$
W(s) = V(s) - m \geq 0,
$$
and let $D = B - m$. Since $V(s)$ and $W(s)$ differ by a univeral constant for all $s$, the drift remains the same
$\nabla W(s) = \nabla V(s),$
and the one-step differences satisfy $|W(s')-W(s)| = |V(s')-V(s)|$. Therefore, all the conditions of the lemma that hold for $(V,B)$ also hold for $(W,D)$, and the same tail bound follows. Moreover,
$$
\mathbb{P}\big(W(S(\infty)) \geq D + x\big)
= \mathbb{P}\big(V(S(\infty)) \geq B + x\big),
$$
so the probability statement is identical. 
In our analysis, we use  lower-bounded Lyapunov functions defined in the most intuitive way without explicitly making them non-negative. 
\end{remark}
}

\section{Most of the Fastest Servers are Busy}
\mostnone*
\begin{proof}
Note $V(q)\geq 1$ means that at least one of the first $N_1$ servers is idle.  So when an arrival occurs, it goes to one of the first $N_1$ servers under JFSQ, we have
\begin{align*}
\triangledown V(q) = & -\lambda N +\sum_{n=1}^{N_1}\mu_n \mathbb{I}(q_n\geq 1)\\
\leq & -\lambda N + (1-\delta) N\\
= & N^{1-\alpha}-\delta N\\
\leq & -\frac{\delta N}{2}.
\end{align*}
where the second inequality holds according to the definition of $\sum_{n=1}^{N_1} \mu_n = (1-\delta)N;$ the last inequality holds because $2/N^\alpha\leq \delta.$ Now invoking Lemma \ref{TailBound} with $B=1,$ $\gamma= {\delta N}/{2},$ $\nu_{\max}=1,$ $q_{\max}\leq N,$ and $j=4\log^2 N,$ and $\mathcal E$ being the entire state space, we obtain that 
\begin{align*}
  \mathbb P\left(V(q)\geq 1+ 8\log^2 N \right)&\leq \left(\frac{N}{N+\frac{\delta N}{2}}\right)^{4\log^2 N}\\ 
   &\leq \left(1- \frac{\delta}{4}\right)^{ 4\log^2 N}\\
   &\leq \exp\left(- \delta \log^2 N\right).
\end{align*}
where the second inequality holds by the assumption $\delta \leq 0.5$ and the last inequality holds $(1-a)^b\leq e^{-ab}$ for any $a\in (0,1)$ and $b>0.$
\end{proof}

\mostntwo*
\begin{proof}
Note $V(q)\geq 1$ means that at least one of the first $N_2$ servers is idle.  So when an arrival occurs, it goes to one of the first $N_2$ servers under JFSQ. Therefore, we have when $V(q)\geq \sqrt{N}\log N,$
\begin{align*}
\triangledown V(q) = & -\lambda N + \sum_{n=1}^{N_2}\mu_n \mathbb{I}(q_n = 1)\\
\leq & -\lambda N + \sum_{n=1}^{N_2}\mu_n \mathbb{I}(q_n\geq 1)\\
=& -\left(\lambda N -\sum_{n=1}^{N_2}\mu_n +\sum_{n=1}^{N_2}\mu_n \mathbb{I}(q_n=0)\right)\\
= & - \sum_{n=1}^{N_2}\mu_n \mathbb{I}(q_n=0)\\
\leq & - \mu_{N_2} \sum_{n=1}^{N_2} \mathbb{I}(q_n=0)\\
\leq & -\mu_{N_2} \sqrt{N} \log N.
\end{align*} 
where we use the definition of $\lambda N = \sum_{n=1}^{N_2}\mu_n,$ the assumption of monotonicity service rates and $V(q)=\sum_{n=1}^{N_2}\mathbb{I}(q_n=0) \geq \sqrt{N}\log N.$   

Now invoking Lemma \ref{TailBound} with $B=\sqrt{N}{\log N},$ $\gamma=\mu_{N_2} \sqrt{N} \log N,$ $\nu_{\max}=1,$ $q_{\max}\leq N,$ $j=2\sqrt{N} \log N,$ $\delta\leq N,$ and $\mathcal E$ being the entire state space, we obtain that 
\begin{align*}
  \mathbb P\left(V(q)\geq 5\sqrt{N} \log N \right)
   \leq & \left(\frac{N}{N+\mu_{N_2}{\sqrt{N}}\log N}\right)^{2\sqrt{N} \log N}\\ 
   \leq &\left(1- \frac{\mu_{N_2}}{2} \frac{\log N}{\sqrt{N}}\right)^{2\sqrt{N} \log N}\\
   \leq &\exp\left(-\mu_{N_2}\log^2 N\right)\\
   \leq &\exp\left(-\epsilon\log^2 N\right).
\end{align*} 
where the second inequality holds by the assumption $\mu_{N_2} \leq \frac{\sqrt{N}}{\log N}$ and the last inequality holds because $(1-a)^b\leq e^{-ab}$ for any $a\in (0,1)$ and $b>0.$
\end{proof}

\section{State Space Collapse in Sub-Halfin-Whitt Regime}
\sscsup*
\begin{proof}
Let $c_0=1-\frac{3}{4\nu}.$
For the Lyapunov function defined in (\ref{ly-func-sub}), the Lyapunov drift is
\begin{align*}
&\triangledown V(q)=E\left[GV(Q)|Q=q\right]\\
= &\sum_{n=1}^{N}\lambda N \mathbb P_n(q) (V(q + e_n) - V(q)) +  \mu_n \mathbb{I}(q_n\geq 1) (V(q - e_n)-V(q)), \nonumber
\end{align*}
{where $\mathbb P_n(q)$ denotes the probability that an arriving job joins the server $n$ given the queue state $q.$} 

Given $V(q)\geq c_0 {N^{1-\alpha}},$ we consider the following two cases.
\begin{itemize}
\item Case 1: Assume $\sum_{n=1}^{N} (q_n-1)\mathbb{I}(q_n\geq 2)\leq N_2+(1-\frac{1}{4\nu})N^{1-\alpha}- \sum_{n=1}^{N} \mathbb{I}(q_n\geq 1).$
If $q_m = 0$, then 
\begin{align*}
&V(q+e_m) -V(q) \\
=& \min\left\{\sum_{n=1}^{N} (q_n-1)\mathbb{I}(q_n\geq 2),~ N_2 + (1-\frac{1}{4\nu})N^{1-\alpha} - \sum_{n=1}^{N} \mathbb{I}(q_n\geq 1)-1\right\}-V(q)\\
\leq & \sum_{n=1}^{N} (q_n-1)\mathbb{I}(q_n\geq 2)-V(q) =0. 
\end{align*} 
If $q_m = 1,$ then 
\begin{align*}
&V(q+e_m)-V(q) \leq \sum_{n=1}^{N} (q_n-1) \mathbb{I}(q_n\geq 2) +1-V(q) =  1,\\
&V(q-e_m) -V(q) = \sum_{n=1}^{N} (q_n-1)\mathbb{I}(q_n\geq 2) -V(q)=0.
\end{align*} 
If $q_m \geq 2,$ then 
\begin{align*}
&V(q+e_m)-V(q)  = \sum_{n=1}^{ N} (q_n-1) \mathbb{I}(q_n\geq 2) +1 -V(q)=1\\
&V(q-e_m) - V(q)= \sum_{n=1}^{ N} (q_n-1)\mathbb{I}(q_n\geq 2) -1-V(q)=-1.
\end{align*} 
From the assumption in Case 1, it implies 
$$\sum_{n=1}^{N} \mathbb{I}(q_n\geq 1)< N_2+ \frac{1}{2\nu}N^{1-\alpha}<\lambda N + \frac{1}
{2}N^{1-\alpha}  <N.$$
where the second inequality holds because $\nu \geq 1$ and $N_2 \leq \lambda N.$ 

Therefore, at least one server is idle, and the incoming job is routed to a server $m$ with $q_m=0$. Under JFSQ, we have
\begin{align}
\triangledown V(q)\leq& -\sum_{n=1}^{N} \mu_n\mathbb{I}(q_n\geq 2) \nonumber\\
\leq& -\epsilon \sum_{n=1}^{N} \mathbb{I}(q_n\geq 2) \nonumber\\
\leq& -\frac{\epsilon}{b-1}\sum_{n=1}^{N} (q_n-1)\mathbb{I}(q_n\geq 2) \nonumber\\
\leq& - \frac{\epsilon c_0 }{(b-1)} N^{1-\alpha}  \nonumber %\label{eq: low}
\end{align}
where the first inequality holds according to the definition of Lyapunov drift by plugging in the cases of $q_m=0$, $q_m=1$ and $q_m\geq 2$ above; the second inequality holds by the assumption $\mu_1 \geq \cdots \geq \mu_{N} \geq \epsilon;$ the third inequality holds by the assumption of finite buffer size $b-1;$ the last inequality holds $V(q) = \sum_{n=1}^{N} (q_n-1)\mathbb{I}(q_n\geq 2) \geq c_0 {N^{1-\alpha}}.$ 

\item Case 2:  Assume $\sum_{n=1}^{N} (q_n-1)\mathbb{I}(q_n\geq 2) > N_2 + (1-\frac{1}{4\nu})N^{1-\alpha} - \sum_{n=1}^{N} \mathbb{I}(q_n\geq 1).$ Similarly as above, we have three cases. 

If $q_m = 0,$ then 
\begin{align*}
V(q+e_m) -V(q) = &N_2 + (1-\frac{1}{4\nu})N^{1-\alpha}  - \left(1+\sum_{n=1}^{N} \mathbb{I}(q_n\geq 1)\right)-V(q)=-1.
\end{align*} 
If $q_m = 1,$ then 
\begin{align*}
&V(q+e_m)-V(q) = N_2 + (1-\frac{1}{4\nu})N^{1-\alpha} -\sum_{n=1}^{ N} \mathbb{I}(q_n\geq 1)-V(q)=0\\
&V(q-e_m) -V(q)\leq N_2 + (1-\frac{1}{4\nu})N^{1-\alpha}  - \left(\left(\sum_{n=1}^{N} \mathbb{I}(q_n\geq 1)\right)-1\right)-V(q)=1
\end{align*} 
If $q_m \geq 2,$ then 
\begin{align*}
&V(q+e_m)-V(q)  = N_2 + (1-\frac{1}{4\nu})N^{1-\alpha}  - \sum_{n=1}^{N} \mathbb{I}(q_n\geq 1)-V(q)=0\\
&V(q-e_m) -V(q)\leq N_2 + (1-\frac{1}{4\nu})N^{1-\alpha} - \sum_{n=1}^{N} \mathbb{I}(q_n\geq 1)-V(q)=0.
\end{align*} 

Therefore, we have that under JFSQ, 
\begin{align*}
\triangledown V(q)\leq&-\lambda N + \sum_{m=1}^{N} \mu_m\mathbb{I}(q_m=1)\\
=&-\lambda N + \sum_{m=1}^{ N} \mu_m\mathbb{I}(q_m\geq 1)-\sum_{m=1}^{N} \mu_m\mathbb{I}(q_m\geq 2)\\
=&-\sum_{m=1}^{N_2} \mu_m + \sum_{m=1}^{ N} \mu_m\mathbb{I}(q_m\geq 1)-\sum_{m=1}^{N} \mu_m\mathbb{I}(q_m\geq 2)\\
\leq &\sum_{m=N_2+1}^{ N} \mu_m\mathbb{I}(q_m\geq 1)-\sum_{m=1}^{N_2} \mu_m \mathbb{I}(q_m = 0) - \sum_{m=1}^{N} \mu_m\mathbb{I}(q_m\geq 2).
\end{align*}
To bound the Lyapunov drift, we first study the first term  
\begin{align*}
\sum_{m=N_2+1}^{ N} \mathbb{I}(q_m\geq 1)
= &\sum_{m=1}^{ N} \mathbb{I}(q_m\geq 1)-\sum_{m=1}^{N_2} \mathbb{I}(q_m\geq 1)\\
\leq &N_2 + (1-\frac{1}{4\nu}- c_0) N^{{1-\alpha}}-\sum_{m=1}^{N_2} \mathbb{I}(q_m\geq 1)\\
= & \frac{1}{2\nu}  N^{{1-\alpha}} + \sum_{m=1}^{N_2} \mathbb{I}(q_m = 0)
\end{align*} where the first inequality holds because $N_2+(1-\frac{1}{4\nu})N^{1-\alpha}- \sum_{m=1}^{ N} \mathbb{I}(q_m\geq 1)\geq  c_0  N^{1-\alpha};$ the third equality holds because $c_0=1-\frac{3}{4\nu}.$  
Therefore, we have 
\begin{align*}
\triangledown V(q)
\leq&  \frac{\mu_{N_2}}{2\nu} N^{{1-\alpha}} + \mu_{N_2} \sum_{m=1}^{N_2} \mathbb{I}(q_m = 0) -\sum_{m=1}^{N_2} \mu_m \mathbb{I}(q_m = 0) -\sum_{m=1}^{N} \mu_m\mathbb{I}(q_m\geq 2)\\
\leq& \frac{\mu_{N_2}}{2\nu} N^{1-\alpha}  - \sum_{m=1}^{N} \mu_m\mathbb{I}(q_m\geq 2) \\
\leq& \frac{\mu_{N_2}}{2\nu} N^{1-\alpha}  - \frac{\epsilon c_0 }{(b-1)} N^{1-\alpha} \\
\leq & - \frac{\epsilon}{4(b-1)} N^{1-\alpha}
\end{align*} 
where the last inequality holds because of $\nu = \frac{(b-1)\mu_{N_2}}{\epsilon}$ and $b\geq 5.$  
\end{itemize}

In summary, we have that under JFSQ, when $V(q)\geq c_0 N^{1-\alpha}$ and the event $\mathcal E_0$ occurs,  
\begin{align*}
\triangledown V(q) \leq - \frac{\epsilon}{4(b-1)} N^{1-\alpha}.
\end{align*} When the event $\mathcal E_0$ does not occur,  
\begin{align*}
\triangledown V(q) \leq N.
\end{align*}
{We now apply the tail bound in Lemma~\ref{TailBound}. As noted in Remark~\ref{remark:lya}, although the Lyapunov function $V(\cdot)$ is not nonnegative, it is lower bounded and can be shifted by a large value (e.g., $N-N_2$ in this case) to satisfy the lemma without affecting the drift or the resulting tail bound.}
Invoking Lemma \ref{TailBound} with $B=c_0N^{1-\alpha},$ $\gamma= \frac{ \epsilon}{4(b-1)} N^{1-\alpha},$ $\nu_{\max}=1,$ $q_{\max} = N,$ $\delta = N$ and $j=\frac{1-c_0}{4}N^{1-\alpha},$ we obtain that for sufficiently large $N$
\begin{align*}
  \mathbb P\left(V(Q)\geq \frac{1+c_0}{2} N^{1-\alpha}\right)
   &\leq \left(\frac{N}{N+\frac{\epsilon}{4(b-1)} N^{1-\alpha}}\right)^{\frac{1-c_0}{4} N^{1-\alpha}}+\left(\frac{N}{\frac{\epsilon}{4(b-1)} N^{1-\alpha}}+1\right) \exp\left(-\epsilon \log^2 N\right)\\
   &\leq \left(1-\frac{\epsilon}{8(b-1)N^\alpha}\right)^{\frac{\epsilon}{8(b-1)\mu_{N_2}} N^{1-\alpha}} + \left(\frac{4(b-1)N^\alpha}{\epsilon}+1\right)\exp\left(-\epsilon\log^2 N\right)\\
   &\leq \exp\left(-\frac{\epsilon^2}{64(b-1)^2\mu_{N_2}} N^{1-2\alpha}\right) + \left(\frac{4(b-1)N^\alpha}{\epsilon}+1\right)\exp\left(-\epsilon\log^2 N\right)\\
   &\leq \exp\left(-2(r+1)\log N\right) + \left(\frac{4(b-1)N^\alpha}{\epsilon}+1\right)\exp\left(-2(r+1)\log N\right)\\
   &\leq \exp\left(-2r\log N\right)
\end{align*} 
where the second inequality holds by plugging the value of $c_0$ and $\frac{\epsilon}{2(b-1)} \leq N^{\alpha}$; the third inequality holds because $(1-a)^b\leq e^{-ab}$ for any $a\in (0,1)$ and $b>0;$ the fourth inequality holds because $N^{1-2\alpha} \geq \frac{128(r+1)(b-1)^2\mu_{N_2}\log N}{\epsilon^2}$ and $\epsilon \geq \frac{2(r+1)}{\log N};$ the last inequality holds by $N^{1-\alpha} \geq \frac{4(b-1)}{\epsilon}+2$.
\end{proof}

\section{State Space Collapse in Super-Halfin-Whitt Regime}
\sscsuper*
\begin{proof}
Let $c_1=\frac{1}{6\mu_{1}}.$
For the Lyapunov function defined in (\ref{ly-func}), the Lyapunov drift is
\begin{align*}
&\triangledown V(q)=E\left[GV(Q)|Q=q\right]\\
= &\sum_{n=1}^{N}\lambda N \mathbb P_n(q) (V(q + e_n) - V(q)) +  \mu_n \mathbb{I}(q_n\geq 1) (V(q - e_n)-V(q)), \nonumber
\end{align*}
{where $\mathbb P_n(q)$ is the probability that an arriving job joins the server $n$ given the queue state $q.$} 

Given $V(q)\geq c_1 {N^{1-\alpha}},$ we consider the following two cases.
\begin{itemize}
\item Case 1: Assume $N - \sum_{n=1}^{N} \mathbb{I}(q_n\geq 1) \geq \sum_{n=1}^{N} (q_n-1)\mathbb{I}(q_n\geq 2) - k N^{\alpha}\log N \geq c_1 {N^{1-\alpha}}.$

If $q_m = 0$, then 
\begin{align*}
&V(q+e_m) -V(q) \\
=& \min\left\{\sum_{n=1}^{N} (q_n-1)\mathbb{I}(q_n\geq 2) - k N^\alpha\log N,~ N - \sum_{n=1}^{N} \mathbb{I}(q_n\geq 1)-1\right\}-V(q)\\
\leq & \sum_{n=1}^{N} (q_n-1)\mathbb{I}(q_n\geq 2)- k N^\alpha\log N-V(q) =0. 
\end{align*} 
If $q_m = 1,$ then 
\begin{align*}
&V(q+e_m)-V(q) \leq \sum_{n=1}^{N} (q_n-1) \mathbb{I}(q_n\geq 2)- k N^\alpha\log N+1-V(q) =  1,\\
&V(q-e_m) -V(q) = \sum_{n=1}^{N} (q_n-1)\mathbb{I}(q_n\geq 2)- k N^\alpha\log N-V(q)=0.
\end{align*} 
If $q_m \geq 2,$ then 
\begin{align*}
&V(q+e_m)-V(q)  = \sum_{n=1}^{ N} (q_n-1) \mathbb{I}(q_n\geq 2)- k N^\alpha\log N+1 -V(q)=1\\
&V(q-e_m) - V(q)= \sum_{n=1}^{ N} (q_n-1)\mathbb{I}(q_n\geq 2)- k N^\alpha\log N-1-V(q)=-1.
\end{align*} 
From the assumption in Case 1, it implies 
$$N > N - c_1 N^{1-\alpha} >  \sum_{n=1}^{N} \mathbb{I}(q_n\geq 1).$$ Therefore, at least one server is idle, and the incoming job is routed to a server $m$ with $q_m=0$. Under JFSQ, we have
\begin{align}
\triangledown V(q)\leq& -\sum_{n=1}^{N} \mu_n\mathbb{I}(q_n\geq 2) \nonumber\\
\leq& -\epsilon \sum_{n=1}^{N} \mathbb{I}(q_n\geq 2) \nonumber\\
\leq& -\frac{\epsilon}{b-1}\sum_{n=1}^{N} (q_n-1)\mathbb{I}(q_n\geq 2) \nonumber\\
\leq& - \frac{\epsilon }{(b-1)} (c_1 N^{1-\alpha} + k N^{\alpha}\log N) \label{eq: low}
\end{align}
where the first inequality holds according to the definition of Lyapunov drift by plugging in the cases of $q_m=0$, $q_m=1$ and $q_m\geq 2$ above; the second inequality holds by the assumption $\mu_1 \geq \cdots \geq \mu_{N} \geq \epsilon;$ the third inequality holds by the assumption of finite buffer size $b-1;$ the last inequality holds by the condition in the assumption in Case 1 above.      

\item Case 2: Assume $\sum_{n=1}^{N} (q_n-1)\mathbb{I}(q_n\geq 2) - k N^{\alpha}\log N >  N - \sum_{n=1}^{N} \mathbb{I}(q_n\geq 1) \geq c_1 {N^{1-\alpha}}.$
 
If $q_m = 0,$ then 
\begin{align*}
V(q+e_m) -V(q) = &N - \left(1+\sum_{n=1}^{N} \mathbb{I}(q_n\geq 1)\right)-V(q)=-1.
\end{align*} 
If $q_m = 1,$ then 
\begin{align*}
&V(q+e_m)-V(q) = N -\sum_{n=1}^{ N} \mathbb{I}(q_n\geq 1)-V(q)=0,\\
&V(q-e_m) -V(q)\leq N   - \left(\left(\sum_{n=1}^{N} \mathbb{I}(q_n\geq 1)\right)-1\right)-V(q)=1.
\end{align*} 
If $q_m \geq 2,$ then 
\begin{align*}
&V(q+e_m)-V(q)  = N   - \sum_{n=1}^{N} \mathbb{I}(q_n\geq 1)-V(q)=0\\
&V(q-e_m) -V(q)\leq N  - \sum_{n=1}^{N} \mathbb{I}(q_n\geq 1)-V(q)=0.
\end{align*} 

Therefore, we have that under JFSQ, 
\begin{align*}
\triangledown V(s)\leq&-\lambda N + \sum_{n=1}^{N} \mu_n\mathbb{I}(q_n=1)\\
=&-\lambda N + \sum_{n=1}^{N} \mu_n\mathbb{I}(q_n\geq 1)-\sum_{n=1}^{N} \mu_n\mathbb{I}(q_n\geq 2)\\
\leq & N(1-\lambda)-\sum_{n=1}^{N} \mu_n\mathbb{I}(q_n\geq 2)\\
\leq& N(1-\lambda) - \frac{\epsilon }{(b-1)} (c_1 N^{1-\alpha} + k N^{\alpha}\log N) \\
\leq&  - \frac{\epsilon }{2(b-1)} (c_1 N^{1-\alpha} + k N^{\alpha}\log N)
\end{align*}
where the first inequality holds according to the definition of Lyapunov drift by plugging in the cases of $q_m=0$, $q_m=1$ and $q_m\geq 2$ above; the second inequality holds by the fact $\sum_{m=1}^{N} \mu_m = N$; the third inequality holds following the same steps as in \eqref{eq: low}; the last inequality holds by the fact $N(1-\lambda) = N^{1-\alpha}$ and the value of $k = \frac{24\mu_1r(b-1)}{\epsilon}.$ 
\end{itemize}

Now invoking Lemma \ref{TailBound} with $B=c_1N^{1-\alpha},$ $\gamma= \frac{\epsilon k}{2(b-1)} N^{\alpha}\log N,$ $\nu_{\max}=1,$ $q_{\max}\leq N,$ $\delta=N$ and $j= c_1 N^{1-\alpha},$ we obtain that for sufficiently large $N$
\begin{align*}
  \mathbb P\left(V(Q)\geq 3c_1  N^{1-\alpha}\right)
   &\leq \left(\frac{N}{N+\frac{\epsilon k}{2(b-1)} N^{\alpha}\log N}\right)^{c_1 N^{1-\alpha}} \\
   &= \left(1 - \frac{\epsilon k}{4(b-1)} N^{\alpha - 1} \log N \right)^{-c_1 N^{1-\alpha}} \\
   &\leq \exp\left(- \frac{\epsilon k c_1}{2(b-1)} \log N\right)
\end{align*}
Recall the value of $k = \frac{24\mu_1r(b-1)}{\epsilon}.$ The second and last inequality holds because $N^{1-\alpha} \geq \frac{\epsilon k}{2(b-1)}  \log N = 12 r \mu_1  \log N.$ We finally have
\begin{align*}
  \mathbb P\left(V(Q)\geq 3c_1  N^{1-\alpha}\right)
   \leq  \exp\left(-r \log N\right).
\end{align*}
\end{proof}

\section{Stein's Method and Iterative Moment Bounds}\label{app:stein}
We provide the detailed steps in generator approximation via Stein's method and derive the iterative moment bounds. To understand the steady-state performance of a load-balancing algorithm, we will establish moment bounds on the following function \cite{LiuYin_20}:
$$h(Q) = \max\left\{\frac{1}{N}\sum_{n=1}^{N} Q_n- \eta,0\right\},$$
where $\eta$ is positive and is different for sub and super Haffin-Whitt regimes. We consider a simple and unified fluid system with arrival rate $\lambda$ and departure rate $\lambda+\frac{1}{N^{\alpha}},$ for the heavy-traffic regime 
$$\dot{x}=-\frac{1}{N^{\alpha}},$$ and function $g(x)$ which is the solution of the following Stein's equation \cite{Yin_16}:
\begin{align}
\frac{dg(x)}{dt} = g'(x) \left(-\frac{1}{N^{\alpha}}\right)  = \left(\max\left\{x-\eta,0\right\}\right)^r, \ \forall x, \label{Gen:L app}
\end{align} where {$g'(x) = \frac{d g(x)}{d x}$} and $r$ is a positive integer. The left-hand side of \eqref{Gen:L app} is to apply the generator of the simple fluid system to function $g(x),$ i.e. $$\frac{dg(x)}{dt}=g'(x)\dot{x}=g'(x) \left(-\frac{1}{N^{\alpha}}\right).$$
It is easy to verify that the solution to \eqref{Gen:L app} is
\begin{equation}g(x)=-\frac{N^{\alpha}}{(r+1)}\left(x-\eta\right)^{r+1} \mathbb{I}_{x\geq\eta},\label{eq:L}\end{equation} and
\begin{equation}g'(x)=-N^{\alpha}\left(x-\eta\right)^{r} \mathbb{I}_{x\geq\eta}.\label{eq:Lder}
\end{equation}
We note that the simple fluid system is a one-dimensional system and the stochastic system is $N$-dimensional. 
Define the average queue length $\bar q= \frac{\sum_{n=1}^Nq_n}{N}$ and $\bar Q= \frac{\sum_{n=1}^NQ_n}{N}$ for simple notation.  
In order to couple these two systems, we define
\begin{equation}
f(q)=g\left(\bar q\right),\label{eq:steinsolution}
\end{equation} and use $f(q)$ defined above in Stein's method. Since $f(q)$ is bounded with a finite buffer, we have 
\begin{equation}
\mathbb{E}[Gf(Q)]= \mathbb E\left[Gg\left(\bar Q\right)\right]=0.\label{eq:sse app}
        \end{equation}
Based on \eqref{Gen:L app} and \eqref{eq:sse app}, we obtain
\begin{align}
\mathbb{E}\left[{h^r}\left(\bar Q\right)\right] =& \mathbb{E}\left[ g'\left(\bar Q\right)  \left(-\frac{1}{N^{\alpha}}\right) - Gg\left(\bar Q\right)\right].\label{eq:gencou app}
\end{align}
Note that according to the definition of $f(q)$ in \eqref{eq:steinsolution} and $e_n$ for $n\leq N$, we have $$f(q+e_n)=g\left(\bar q+\frac{1}{N}\right)$$ and
$$f(q-e_n)=g\left(\bar q-\frac{1}{N}\mathbb{I}(q_n\geq 1)\right).$$ Therefore,
\begin{align*}
    Gg\left(\bar q\right)=&
    N\lambda(1-A_b(q))\left(g\left(\bar q+\frac{1}{N}\right)-g\left(\bar q\right)\right)\\
    &+\left(\sum_{n=1}^{N} \mu_n \mathbb{I}(q_n\geq 1)\right) \left(g\left(\bar q-\frac{1}{N}\right)-g\left(\bar q\right)\right),
\end{align*} where $A_b(q)$ is the probability that an incoming job is routed to a server with $b$ jobs. 
Substituting the equation above to \eqref{eq:gencou app}, we have \begin{align}
\mathbb{E}\left[{h^r}\left(\bar Q\right)\right]
=&\mathbb{E}\left[g'\left(\bar Q\right) \left(-\frac{1}{N^{\alpha}}\right)-N\lambda(1-A_b(Q))\left(g\left(\bar Q+\frac{1}{N}\right)-g\left(\bar Q\right)\right)\right.\nonumber\\
    &\left.-\left(\sum_{n=1}^{N} \mu_n \mathbb{I}(q_n\geq 1)\right) \left(g\left(\bar Q-\frac{1}{N}\right)-g\left(\bar Q\right)\right)\right]. \label{gen-diff-expand}
\end{align}
From the closed-forms of $g$ and $g'$ in \eqref{eq:L} and \eqref{eq:Lder}, note that for any $x <  \eta,$
\begin{equation*}
g(x) = g'\left(x\right)=0.
\end{equation*} Also note that when $x>\eta+\frac{1}{N},$
$$g'(x)=- N^{\alpha} \left(x-\eta\right)^{r},$$
\begin{equation}g''(x)=- r N^{\alpha}\left(x-\eta\right)^{r-1} .\end{equation}
By using mean-value theorem in the region $[\eta-\frac{1}{N}, \eta+\frac{1}{N}]$ and Taylor theorem in the region $(\eta+\frac{1}{N}, \infty)$, we have
\begin{align}
g(x+\frac{1}{N})-g\left(x\right) =& \left(g(x+\frac{1}{N})-g\left(x\right)\right) \left(\mathbb I_{\eta-\frac{1}{N} \leq x \leq \eta+\frac{1}{N}} + \mathbb I_{ x > \eta+\frac{1}{N}} \right) \nonumber\\
=&  \frac{g'(\xi)}{N}\mathbb I_{\eta-\frac{1}{N} \leq x \leq \eta+\frac{1}{N}} + \left(\frac{g'(x)}{N} +  \frac{g''(\zeta)}{2N^2}\right) \mathbb I_{ x > \eta+\frac{1}{N}} \label{g+1/N}\\
g(x-\frac{1}{N})-g\left(x\right) =& \left(g(x-\frac{1}{N})-g\left(x\right)\right) \left(\mathbb I_{\eta-\frac{1}{N} \leq x \leq \eta+\frac{1}{N}} + \mathbb I_{ x > \eta+\frac{1}{N}} \right) \nonumber\\
=&  -\frac{g'(\tilde{\xi})}{N}\mathbb I_{\eta-\frac{1}{N} \leq x \leq \eta+\frac{1}{N}} + \left(-\frac{g'(x)}{N} +  \frac{g''(\tilde{\zeta})}{2N^2}\right) \mathbb I_{ x > \eta+\frac{1}{N}} \label{g-1/N}
\end{align}
where $\xi, \zeta \in (x,x+\frac{1}{N})$ and $\tilde{\xi}, \tilde{\zeta} \in (x-\frac{1}{N},x).$
Substitute \eqref{g+1/N} and \eqref{g-1/N} into the generator difference in \eqref{gen-diff-expand}, we have
\begin{align}
&\mathbb{E}\left[h^r\left(\bar Q\right)\right]\nonumber\\
= &\mathbb{E}\left[g'\left(\bar Q\right)\left(\lambda A_b(Q)- \lambda-\frac{1}{N^{\alpha}}+\frac{1}{N}\left(\sum_{n=1}^{N} \mu_n \mathbb{I}(Q_n\geq 1)\right) \right)\mathbb{I}_{\bar Q> \eta +\frac{1}{N}}\right] \label{G-expansion-SSC}\\
&+\mathbb{E}\left[\left(g'\left(\bar Q\right)\left(-\frac{1}{N^{\alpha}}\right)-\lambda(1-A_b(Q))g'(\xi) + \frac{1}{N}\left(\sum_{n=1}^{N} \mu_n \mathbb{I}(Q_n\geq 1)\right)g'(\tilde{\xi})\right)\mathbb{I}_{\eta-\frac{1}{N} \leq \bar Q \leq \eta+\frac{1}{N}}\right] \label{G-expansion-Gradient-1}\\
&-\mathbb{E}\left[\frac{1}{2N}\left(\lambda(1-A_b(Q))g''(\zeta) + \frac{1}{N}\left(\sum_{n=1}^{N} \mu_n \mathbb{I}(Q_n\geq 1)\right)g''(\tilde{\zeta})\right)\mathbb{I}_{\bar Q > \eta+\frac{1}{N}}\right]. \label{G-expansion-Gradient-2}
\end{align}
Next, we study $g'$ and $g''$ to bound the terms \eqref{G-expansion-Gradient-1} and \eqref{G-expansion-Gradient-2}, and SSC to bound the term \eqref{G-expansion-SSC}.

\subsection{Gradient Bounds}
We summarize bounds on $g'$ and $g''$ in the following two lemmas.

\begin{lemma}\label{lemma:g'}
For any  $x\in\left[\eta -\frac{2}{N}, \eta  +\frac{2}{N}\right],$ we have $$|g'(x)|\leq \frac{2^r}{ N^{r-\alpha}}.$$
\end{lemma}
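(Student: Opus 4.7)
The plan is to directly substitute the closed-form expression of $g'$ derived in \eqref{eq:Lder} and bound $(x-\eta)^r$ on the relevant interval. First I would split the interval $[\eta - \frac{2}{N}, \eta + \frac{2}{N}]$ into the subinterval $[\eta - \frac{2}{N}, \eta)$, where the indicator $\mathbb{I}_{x \geq \eta}$ vanishes and hence $g'(x)=0$, giving the bound trivially, and the subinterval $[\eta, \eta + \frac{2}{N}]$, where $g'(x) = -N^{\alpha}(x-\eta)^r$ and therefore $|g'(x)| = N^{\alpha}(x-\eta)^r$.

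Next, on $[\eta, \eta + \frac{2}{N}]$ I would use the monotonicity of $t \mapsto t^r$ for $t \geq 0$ (with $r$ a positive integer) to obtain $(x-\eta)^r \leq \left(\frac{2}{N}\right)^r = \frac{2^r}{N^r}$. Multiplying by $N^{\alpha}$ yields $|g'(x)| \leq \frac{2^r}{N^{r-\alpha}}$, which is exactly the desired bound. No obstacle here, since the statement follows by inspection of the closed form and a single monotonicity argument; the lemma is essentially a scaling estimate that later gets fed into the Taylor-expansion remainder terms \eqref{G-expansion-Gradient-1} and \eqref{G-expansion-Gradient-2} in the Stein coupling.
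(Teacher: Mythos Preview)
Your proposal is correct and follows essentially the same approach as the paper: both substitute the closed form of $g'$ from \eqref{eq:Lder}, use the indicator to handle the region $x<\eta$, and bound $(x-\eta)^r$ by $(2/N)^r$ on the rest of the interval. The paper's version is slightly terser (it does not split into cases explicitly), but the argument is identical.
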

\begin{proof}
For any $x\in\left[\eta  -\frac{2}{N}, \eta +\frac{2}{N}\right],$ from the closed-form expression of  $g'$ in \eqref{eq:Lder} we have 
\begin{eqnarray*}
|g'(x)|=& \left|- N^{\alpha}\left(x-\eta\right)^{r} \mathbb{I}_{x\geq \eta} \right|\\
\leq& N^{\alpha}\left(\frac{2}{N}\right)^r=\frac{2^r}{N^{r - \alpha}}.    
\end{eqnarray*}
\end{proof}

\begin{lemma}\label{lemma:g''}
For $x>\eta,$ we have 
\begin{align*}
|g''(x)|\leq r N^{\alpha} h^{r-1}(x).
\end{align*}
\end{lemma}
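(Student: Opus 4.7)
The plan is to proceed by direct computation from the closed-form expressions already derived in the excerpt. From \eqref{eq:L} we have $g(x) = -\frac{N^\alpha}{r+1}(x-\eta)^{r+1} \mathbb{I}_{x \geq \eta}$, and differentiating once gives \eqref{eq:Lder}. Differentiating a second time on the region $x > \eta$ yields $g''(x) = -r N^\alpha (x-\eta)^{r-1}$.

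Taking absolute values, $|g''(x)| = r N^\alpha (x-\eta)^{r-1}$ for $x > \eta$. Since $h(x) = \max\{x - \eta, 0\} = x - \eta$ whenever $x > \eta$, we have $h^{r-1}(x) = (x-\eta)^{r-1}$ on this region, which immediately gives the claimed bound $|g''(x)| \leq r N^\alpha h^{r-1}(x)$ (in fact with equality).

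The only minor subtlety is the boundary behavior at $x = \eta$: $g$ is $C^r$ at $\eta$ when $r \geq 1$, since both $g$ and $g'$ vanish there and $g''$ vanishes at $\eta$ as well when $r \geq 2$ (and is a constant $-rN^\alpha$ when $r = 1$, matching $h^0 = 1$). So the pointwise formula and the bound are consistent on the whole region $x > \eta$. There is no real obstacle here; the lemma is essentially the direct analogue of Lemma \ref{lemma:g'} one derivative further, used later to control the second-order Taylor remainder term in \eqref{G-expansion-Gradient-2}.
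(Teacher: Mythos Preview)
Your proof is correct and follows essentially the same approach as the paper: differentiate the closed form of $g'$ on $x>\eta$ to obtain $g''(x)=-rN^{\alpha}(x-\eta)^{r-1}$, then identify $(x-\eta)^{r-1}=h^{r-1}(x)$ there. The extra remarks about boundary behavior at $x=\eta$ are fine but unnecessary, since the lemma is stated only for $x>\eta$.
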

\begin{proof}
For $x>\eta,$ we have
$$g'(x)=- N^{\alpha} \left(x-\eta\right)^{r} ,$$ which implies
\begin{eqnarray*}
g''(x)= -r N^{\alpha} \left(x-\eta\right)^{r-1}.
\end{eqnarray*}
and
\begin{eqnarray*}
|g''(x)|\leq r N^{\alpha} \left(\max\left\{x-\eta,0\right\}\right)^{r-1}.
\end{eqnarray*}
\end{proof}

Based on Lemma \ref{lemma:g'}, we bound the term \eqref{G-expansion-Gradient-1}
\begin{align}
\eqref{G-expansion-Gradient-1}\leq& \left({\lambda+1}+\frac{1}{{N}^{\alpha}}\right)\frac{2^r}{N^{r-\alpha}} \leq \frac{2^{r+1}}{ N^{r-\alpha}}. \label{Bound-1}
\end{align}

Based on Lemma \ref{lemma:g''} and fact that $h(x)$ is a nondecreasing function, we bound the term \eqref{G-expansion-Gradient-2}
\begin{align}
 \eqref{G-expansion-Gradient-2}\leq&  \mathbb{E}\left[\frac{1}{2N}\left(\lambda |g''(\zeta)| + \frac{1}{N}\left(\sum_{n=1}^{N} \mu_n \mathbb{I}(Q_n\geq 1)\right)|g''(\tilde{\zeta})|\right)\mathbb{I}_{\bar Q > \eta+\frac{1}{N}}\right]\\
 \leq &\frac{1}{2N} rN^{\alpha} \mathbb{E}\left[{h^{r-1}}\left(\bar Q +{\frac{1}{N}}\right)\left(\lambda + \frac{1}{N}\left(\sum_{n=1}^{N} \mu_n \mathbb{I}(Q_n\geq 1)\right)\right)\right]\\
  \leq &\frac{\lambda+1}{2N} rN^{\alpha} \mathbb{E}\left[{h^{r-1}}\left(\bar Q +{\frac{1}{N}}\right)\right] \\
 \leq & \frac{r}{N^{1-\alpha}} \mathbb{E}\left[{h^{r-1}}\left(\bar Q +{\frac{1}{N}}\right)\right]. \label{Bound-2}
\end{align}
Therefore, we have 
\begin{align}
&\mathbb{E}\left[g'\left(\bar Q\right)\left(\lambda A_b(Q)- \lambda-\frac{1}{N^{\alpha}}+\frac{1}{N}\left(\sum_{n=1}^{N} \mu_n \mathbb{I}(Q_n\geq 1)\right) \right)\mathbb{I}_{\bar Q> \eta +\frac{1}{N}}\right]\\
=&\mathbb{E}\left[-N^{\alpha} \left(\bar Q-\eta\right)^{r} \left(\lambda A_b(Q)- \lambda-\frac{1}{N^{\alpha}}+\frac{1}{N}\left(\sum_{n=1}^{N} \mu_n \mathbb{I}(Q_n\geq 1)\right) \right)\mathbb{I}_{\bar Q> \eta +\frac{1}{N}}\right] \\
=&\mathbb{E}\left[N^{\alpha} \left(\bar Q-\eta\right)^{r} \left(\lambda+\frac{1}{N^{\alpha}}-\frac{1}{N}\left(\sum_{n=1}^{N} \mu_n \mathbb{I}(Q_n\geq 1)\right) -\lambda A_b(Q) \right)\mathbb{I}_{\bar Q> \eta +\frac{1}{N}}\right] \\
\leq &\mathbb{E}\left[N^{\alpha} h^r(\bar Q) \left(\lambda+\frac{1}{N^{\alpha}}-\frac{1}{N}\left(\sum_{n=1}^{N} \mu_n \mathbb{I}(Q_n\geq 1)\right)  \right)\mathbb{I}_{\bar Q> \eta +\frac{1}{N}}\right].
\end{align}
{where the last inequality holds by dropping the negative term of $-\lambda A_b(Q)$ and recalling the definition of $h(\bar Q)=\max\{\bar Q - \eta, 0\}$.}  
Finally, we have \begin{align}
\mathbb{E}\left[h^r\left(\bar Q\right)\right]
\leq &\mathbb{E}\left[h^r(\bar Q) N^{\alpha} \left(\lambda+\frac{1}{N^{\alpha}}-\frac{1}{N}\left(\sum_{n=1}^{N} \mu_n \mathbb{I}(Q_n\geq 1)\right)  \right)\mathbb{I}_{\bar Q> \eta +\frac{1}{N}}\right] \nonumber\\
&+ \frac{2^{r+1}}{ N^{r-\alpha}} + \frac{r}{N^{1-\alpha}} \mathbb{E}\left[{h^{r-1}}\left(\bar Q +{\frac{1}{N}}\right)\right]. 
\end{align}

\newpage
\bibliographystyle{plain}
\bibliography{inlab-refs}

\end{document}